\setlist{nolistsep}
\newtheorem{theorem}{Theorem}%[section]
\newtheorem{lemma}[theorem]{Lemma}
\newtheorem{proposition}[theorem]{Proposition}
\newtheorem{corollary}[theorem]{Corollary}
\newtheorem{definition}[theorem]{Definition}
\newtheorem{assumption}{Assumption}
\newcommand{\RR}{\mathbb{R}}      % Real numbers
\newcommand{\ifn}{\mathbf{1}} % indicator function for sets
\newcommand{\evp}[2]{\mathbb{E}_{#2} \left[#1\right]} % expected value operator
\newcommand{\abs}[1]{\left| #1 \right|}
\newcommand{\prp}[2]{\text{Pr}_{#2} \left(#1\right)}
\newcommand{\cGar}{\textsc{SQ}}
\newcommand{\cU}{\mathcal{U}}
\newcommand{\cF}{\mathcal{F}}
\newcommand{\cG}{\mathcal{G}}
\newcommand{\lrp}[1]{\left(#1\right)}
\newcommand{\lrb}[1]{\left[#1\right]}
\newcommand{\lrsetb}[1]{\left\{#1\right\}}
\providecommand{\customgenericname}{}
\newcommand{\newcustomtheorem}[2]{%
  \newenvironment{#1}[1]
  {%
   \renewcommand\customgenericname{#2}%
   \renewcommand\theinnercustomgeneric{##1}%
   \innercustomgeneric
  }
  {\endinnercustomgeneric}
}
\def\blfootnote{\xdef\@thefnmark{}\@footnotetext}
\begin{document}

\title{%\xcancel{Prevent} 
$p$-value peeking 
and estimating extrema}
% % Alternate title] 
% % \large (or, how I learned to stop whenever I wanted)}

\author{%
  Akshay Balsubramani \\
  % Department of Computer Science\\
  Stanford University\\
  % Pittsburgh, PA 15213 \\
  \texttt{abalsubr@stanford.edu}
}

\maketitle

\begin{abstract}
% A pervasive issue in statistical hypothesis testing is that the reported $p$-values are biased downward by data ``peeking" -- the practice of %repeatedly recomputing the test statistic as more data samples are collected, 
% reporting only progressively extreme values of the test statistic as more data samples are collected. We develop practical mechanisms to eliminate the effect of peeking in common, general scenarios. 
% Our methods are essentially as aggressive as possible under the null, as we prove. % using techniques from random walk theory. 
% We demonstrate our claims empirically in a variety of ways, including for MMD- and nearest neighbor-based statistics, and to forecast ultimate future convergence of validation loss curves. The new mechanisms are available as a web tool to facilitate usage on these and any other provided statistics. 
A pervasive issue in statistical hypothesis testing is that the reported $p$-values are biased downward by data ``peeking" -- the practice of reporting only progressively extreme values of the test statistic as more data samples are collected. 
We develop principled %and practical 
mechanisms to estimate such running extrema of test statistics, which directly address the effect of peeking in some general scenarios. 
%Our methods are almost as aggressive as possible under the null., including for nonparametric %MMD- and nearest neighbor-based statistics, and to forecast future convergence of validation loss curves. The new mechanisms are available as a web tool to facilitate usage on these and any other provided statistics. 
\end{abstract}

% \begin{keywords}
%   Bayesian Networks, Mixture Models, Chow-Liu Trees
% \end{keywords}

\section{The problem of peeking}

%This paper develops general plug-in mechanisms for testing extremal test statistics to eliminate the prevailing need for pre-specified sample sizes in many statistical hypothesis tests.

Consider a scientist trying to test a hypothesis on some huge population of samples $X_1, \dots, X_n$. 
The test statistic $f (X_1, \dots, X_n)$ is estimated by drawing a random sample of the data (say $X_1, \dots, X_t$) to compute the conditional expectation $\evp{f (X_1, \dots, X_n) \mid X_{1:t}}{}$. Assuming a null hypothesis with some given $\evp{f (X_1, \dots, X_n)}{}$, a $p$-value $A_t$ is calculated. 
A pragmatic practitioner with ample computing resources is primarily limited by the availability of data, gathering more samples with time. 
While repeatedly testing all data gathered so far, it is common to ``peek" at the reported $p$-values $P_1, \dots, P_t$ until one is low enough to be significant (say at time $\tau$), and report that $p$-value $P_{\tau} = \min_{s \leq \tau} P_s$, resulting in the reported $p$-value having a downward bias. 

Peeking is a form of \textit{$p$-value hacking} that is widespread in empirical science for appealing reasons -- collecting more data after an apparently significant test result can be costly, and of seemingly questionable benefit. 
It has long been argued that the statistician's opinion should not influence the degree of evidence against the null -- ``the rules governing when data collection stops are irrelevant to data interpretation" \citep{ELS63} -- and that collecting more data and hence evidence should always help, not invalidate, previous results. 
However, standard $p$-value analyses ``depend on the intentions of the investigator" \citep{N00} in their choice of stopping rule. 

% Current best practices for dealing with 
% at odds with the assumptions of standard fixed-sample statistics. 
% In this paper, we give a set of general algorithmic remedies for this problem. 

But it can be proven that for many common tests, repeating the test long enough will lead the scientist to only report a low enough $p$-value -- classical work recognizes that they are ``sampling to reach a foregone conclusion" \citep{A54}. 
The lamentable conclusion is that peeking makes it much more likely to falsely report significance under the null hypothesis. 

This problem has been addressed by existing theory on the subject. 
A line of work by Vovk and coauthors \citep{V93, SSVV11, VW19} develops the idea of correcting the $p$-values uniformly over time using a ``test martingale," and contains further historical references on this idea. 
As viewed within the context of Bayes factors and likelihood ratios, this has also drawn more recent attention for its robustness to stopping \citep{G18, grunwald2019safe}. 
%By introducing the finite-time LIL and also showing its anti-concentration, we are able to make the idea broadly applicable even to frequentist tests that use (martingale) CLTs, with a provably optimal method. 
Such work is based on a martingale-based framework for analyzing $p$-values when peeking is performed in such scenarios, described in Section \ref{sec:pval-impl}. 
The corrected $p$-value is valid for all times, not just the time it is computed -- seeing \emph{at any time} a value of $\delta$ allows rejection of the null at significance level $\delta$. 
This holds irrespective of the details of the peeking procedure. 
In a certain sense, this allows us to peer into the future, giving a null model for the future results of peeking. 

We build on this to introduce a family of peeking-robust sequential hypothesis tests in Sec. \ref{sec:multreprmart} and \ref{sec:aytest}. 
The basic vulnerability of many statistical tests to peeking is that they measure \emph{average} phenomena, which are easily distorted by peeking. 
We develop sequential mechanisms for estimating \emph{extremal} functions of a test statistic. 
These use quantitative diagnostics that track the risk of future peeking under the null with past information, and lead to a general random walk decomposition of possible independent interest (e.g., Theorem \ref{thm:revbachelier}). %Associated theory we introduce shows our characterizations to be tight and uniquely optimal for the problem.
%Section \ref{sec:ppeekexperiments} follows with an empirical evaluation of these results, and 
Section \ref{sec:disc} discusses them at length in the context of several previous lines of work. 
Most proofs are deferred to the appendix.

\section{Setup: always valid $p$-values}% for sub-Gaussian statistics}
\label{sec:pval-impl}

Recalling our introductory discussion, a common testing scenario involving a statistic $f$ tests a sample using the conditional mean over the sample: $N_t := \evp{f (X_1, \dots, X_n) \mid X_{1:t}}{}$. 
The stochastic process $N$ is a \emph{martingale} because  $\forall t: \;\evp{(N_t - N_{t-1}) \mid X_{1:(t-1)}}{} = 0$ \citep{D10}. 
Similarly, a \emph{supermartingale} has differences with conditional mean $\leq 0$. 
A more general and formal definition conditions on the canonical filtration $\cF$ (see \Cref{sec:allproofs}). 

A $p$-value is a random variable $P$ produced by a statistical test such that under the null, %after testing a fixed number $t$ of samples,
$\prp{P \leq s}{} \leq s \;\;\forall s > 0$. 
We will discuss this in terms of \emph{stochastic dominance} of random variables. 

\begin{definition}
\label{defn:fsd}
A real-valued random variable $X$ (first-order) stochastically dominates another real r.v. $Y$ (written $X \succeq Y$) if either of the following equivalent statements is true \citep{RockafellarRoyset14}: 
$(a)$ For all $c \in \RR$, $\prp{X \geq c}{} \geq \prp{Y \geq c}{}$.
$(b)$ For any nondecreasing function $h$, $\evp{h(X)}{} \geq \evp{h(Y)}{}$.
Similarly, define $X \preceq Y$ if $-X \succeq -Y$. If $X \preceq Y$ and $X \succeq Y$, then $X \stackrel{d}{=} Y$.
\end{definition}

In these terms, a $p$-value $P$ satisfies $P \succeq \mathcal{U}$, with $\mathcal{U}$ a  $\text{Uniform}([0,1])$ random variable.
This can be described as the quantile function of the test's statistic under the null hypothesis. 

The peeker can choose any random time $\tau$ without foreknowledge, to report the value they see as final -- they choose a \emph{stopping time} $\tau$ (see \Cref{sec:allproofs} for formal definitions) instead of pre-specifying a fixed time $t$. 
So a peeking-robust $p$-value $H_t$ requires that for all stopping times $\tau$, $H_{\tau} \succeq \mathcal{U}$. 
As $\tau$ could be any fixed time, this condition is more strict than the condition on $P$ for a fixed $t$. 
$H$ is an inflated process that compensates for the downward bias of peeking. 

How is the stochastic process $H$ defined? 
There is one common recipe: define $H_t = \frac{1}{M_t}$, using a nonnegative discrete-time (super)martingale $M_t$ %$\lim_{t \to \infty} M_t = 0$ and 
with $M_0 = 1$. 
This guarantees $H$ is a robust $p$-value process, i.e. $H_{\tau} \succeq \mathcal{U}$ for stopping times $\tau$. 
(The reason why is briefly stated here: the expectation $\evp{M_{\tau}}{}$ is controlled at any stopping time $\tau$ by the supermartingale optional stopping theorem (Theorem \ref{thm:optstoppingsupermart}), so $\evp{ M_{\tau} }{} = \evp{ M_{0} }{} = 1$. 
Therefore, using Markov's inequality on $M_{\tau}$, we have $\prp{ H_{\tau} \leq s}{} = \prp{ M_{\tau} \geq \frac{1}{s}}{} \leq s$. )

Such a ``test [super]martingale" $M_t$ turns out to be ubiquitous in studying sequential inference procedures \citep{SSVV11, VW19}, and is effectively necessary for such inference \citep{RRLK20}. 
\Cref{sec:ztest}
Therefore, our analysis focuses on a nonnegative discrete-time supermartingale $M_t$ %$\lim_{t \to \infty} M_t = 0$ and 
with $M_0 = 1$. 
We also use the cumulative maximum $S_{t} := \max_{s \leq t} M_{s}$ and the lookahead maximum $S_{\geq t} := \max_{s \geq t} M_s$.

\section{Warm-up: has the ultimate maximum been attained?}
\label{sec:multreprmart}

% We can again be motivated by repeated $z$-testing, when the exponential construction $\expp{M_t - \frac{\lambda^2}{2} V_t}$ is a martingale. 
In the peeking scenario, it suffices to consider times until $\tau_F := \max \lrsetb{ s \geq 0 : M_s = S_s}$, the time of the final attained maximum, because no peeker can report a greater value than they see at this time. 
However, $\tau_F$ is not a stopping time because it involves occurrences in the future, so traditional martingale methods do not study it. 

Studying $\tau_F$ is a useful introduction to the main results of this paper. 
We describe $\tau_F$ by establishing a ``multiplicative representation" of a nonnegative discrete-time (super)martingale $M_t$ %$\lim_{t \to \infty} M_t = 0$ and 
(with $M_0 = 1$) in terms of its maxima. 

\begin{theorem}[Bounding future extrema with the present]
\label{thm:supermartdecomp}
Define the supermartingale $Z_t := \prp{ \tau_F \geq t \mid \cF_t}{}$.
Then with $\mathcal{U}$ a standard $\text{Uniform}([0,1])$ random variable: 
\begin{enumerate}[(a)]
    \item 
    \label{item:doobineqpastfuture}
    $S_{\geq t} \preceq \frac{M_t}{\mathcal{U}} $. Therefore, $S_{\infty} \preceq \frac{1}{\mathcal{U}}$, and $\forall t$ such that $M_t > 0$, $S_{\infty} \preceq S_t \max \lrp{ 1, \frac{M_t}{S_t} \lrp{ \frac{1}{\mathcal{U}}} } $.
    \item
    $Z_t \leq \frac{M_t}{S_t}$, with equality if $M$ is a martingale.
    \item
    Define $ Q_t := \sum_{i=1}^{t} \lrp{ \frac{ M_{i} - M_{i-1} }{S_{i}} } $ 
    and 
    $ L_t := \sum_{q=1}^{t} M_{q-1} \lrp{ \frac{1}{S_{q-1}} - \frac{1}{S_{q}} }$. 
    Then the decomposition $Z_t \leq 1 + Q_t - L_t$ holds, with equality for martingale $M$. 
    Furthermore: 
    \begin{itemize}
        \item
        $Q$ is a (super)martingale if $M$ is. 
        \item
        $L$ is a nondecreasing process which only changes when $M$ hits a new maximum. 
    \end{itemize} 
\end{enumerate}
\end{theorem}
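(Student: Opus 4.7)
The plan is to treat Doob's maximal inequality, applied conditionally on $\cF_t$ to the shifted process $(M_s)_{s \geq t}$, as the common engine for parts (a) and (b), and to reduce (c) to a telescoping algebraic identity. The bridge linking $\tau_F$ to the suprema is the event identity $\lrsetb{\tau_F \geq t} = \lrsetb{S_{\geq t} \geq S_t}$, obtained by unrolling $\tau_F = \max\lrsetb{s : M_s = S_s}$: if the post-$t$ supremum is attained at some $s \geq t$ with $M_s \geq S_t$, then $M_s \geq S_{t-1}$ and $M_s \geq M_r$ for all $r \in [t,s]$, so $M_s = S_s$ and hence $\tau_F \geq s \geq t$; the converse is immediate.

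For part (a), conditional on $\cF_t$ the process $(M_s/M_t)_{s \geq t}$ is a nonnegative supermartingale starting at $1$ on $\lrsetb{M_t > 0}$, so Doob's maximal inequality gives $\prp{S_{\geq t} \geq \lambda M_t \mid \cF_t}{} \leq 1/\lambda$, which is exactly $S_{\geq t} \preceq M_t/\mathcal{U}$ in the stochastic-dominance sense. Specializing to $t = 0$ yields $S_\infty \preceq 1/\mathcal{U}$, and the last clause follows by writing $S_\infty = \max(S_t, S_{\geq t})$ and invoking \Cref{defn:fsd}(b) with the nondecreasing function $x \mapsto \max(S_t, x)$. For part (b), combining the event identity with the conditional Doob bound at threshold $\lambda = S_t \geq M_t$ immediately gives $Z_t = \prp{S_{\geq t} \geq S_t \mid \cF_t}{} \leq M_t/S_t$; the martingale-equality claim is most naturally extracted from part (c).

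For part (c), the algebraic heart is a telescoping step: the $i$-th summand of $Q_t - L_t$ collapses to $M_i/S_i - M_{i-1}/S_{i-1}$, so $1 + Q_t - L_t \equiv M_t/S_t$ as a deterministic identity, and the inequality claim is thus a restatement of (b). For the supermartingale property of $Q$, I would establish the pointwise bound $(M_t - M_{t-1})/S_t \leq (M_t - M_{t-1})/S_{t-1}$ by cases: if $M_t > M_{t-1}$ both sides are positive and $S_t \geq S_{t-1}$, while if $M_t \leq M_{t-1} \leq S_{t-1}$ then $S_t = S_{t-1}$ and the ratios coincide. Pulling the $\cF_{t-1}$-measurable factor $1/S_{t-1}$ out of the conditional expectation and invoking $\evp{M_t - M_{t-1} \mid \cF_{t-1}}{} \leq 0$ then gives the claim. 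For $L$, each increment $M_{t-1}(1/S_{t-1} - 1/S_t)$ is nonnegative, and strictly positive iff $S_t > S_{t-1}$, which happens iff $M_t > S_{t-1}$, i.e., iff $M$ sets a strict new maximum.

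The hard part is the equality claim for martingales. Doob's maximal inequality is generally strict in discrete time due to overshoot, so a matching lower bound $Z_t \geq M_t/S_t$ needs input beyond what proves (a)--(b). The natural strategy is optional stopping at the hitting time $\sigma := \inf\lrsetb{s \geq t : M_s \geq S_t}$, using $\evp{M_{\sigma \wedge T} \mid \cF_t}{} = M_t$ for each $T$ and then passing to the limit, with a Fatou or uniform-integrability argument needed to control both the overshoot $M_\sigma - S_t$ on $\lrsetb{\sigma < \infty}$ and the residual $M_\infty$ on $\lrsetb{\sigma = \infty}$. I would expect this step to genuinely rely on $M$ being a martingale (not merely a supermartingale), so that the slack in the case-split inequality behind the supermartingality of $Q$ is absorbed when the decomposition is promoted to the equality $Z_t = 1 + Q_t - L_t$.
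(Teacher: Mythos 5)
Your approach matches the paper's almost step for step: parts (a) and (b) are obtained from the conditional form of Ville's inequality (your ``conditional Doob'') applied to $(M_s)_{s \geq t}$ together with the event identity $\lrsetb{\tau_F \geq t} = \lrsetb{\exists s \geq t : M_s \geq S_t}$, and part (c) is the telescoping identity $1 + Q_t - L_t = M_t/S_t$ fed back into (b). You actually go further than the paper's written proof by verifying the supermartingale property of $Q$ (via $(M_t-M_{t-1})/S_t \leq (M_t-M_{t-1})/S_{t-1}$ pointwise) and the monotonicity and increment-support of $L$, which the theorem asserts but the proof in the appendix does not argue.

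Your hesitation about the ``equality for martingale $M$'' clause is well placed, and in fact exposes an imprecision in the paper. The appendix derives (b) from \Cref{lem:ville}, whose equality case is stated only ``for martingale $M$ with $\lim_{t\to\infty}M_t\to 0$ a.s.'' -- a hypothesis \Cref{thm:supermartdecomp} silently drops. Worse, even under that hypothesis the equality step in the proof of \Cref{lem:ville} implicitly identifies $\evp{M_\tau \mid \tau<\infty,\cF_s}{}$ with $M_s/c$, which requires the process to hit the level $M_s/c$ without overshoot; in discrete time the overshoot $M_\tau - M_s/c$ is generically positive and then the bound is strict. Your optional-stopping sketch isolates exactly the three things the equality needs: $M_\infty = 0$ on $\lrsetb{\sigma = \infty}$, a UI/Fatou argument to pass $T\to\infty$ in $\evp{M_{\sigma\wedge T}\mid\cF_t}{}=M_t$, and zero overshoot at $\sigma$. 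The first two are mild for the test martingales of interest; the third is the one that cannot be argued away and is precisely what the continuous-path analogues of \cite{NY06} supply for free. So the gap you flag is real, but it lies in the theorem statement as written rather than in your strategy: the honest fix is to carry the $M_\infty\to 0$ hypothesis and a no-overshoot assumption into (b)--(c), or to weaken ``equality'' to an inequality whose slack is controlled by the overshoot at the hitting time.
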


$Z_t$ is called the \emph{Az\'{e}ma supermartingale} of $M$ \citep{Azema73}. 
%$L_t \leq \log (S_t)$ inequality, 
Note that $M_{t-1} \leq S_{t-1}$, so that 
\begin{align}
\label{eq:logmaxupp}
L_t \leq \sum_{q=1}^{t} S_{q-1} \lrp{ \frac{1}{S_{q-1}} - \frac{1}{S_{q}} } = \sum_{q=1}^{t} \lrp{ 1 - \frac{ S_{q-1} }{S_{q}} } \leq \sum_{q=1}^{t} \log \lrp{ \frac{ S_{q} }{S_{q-1}} } = \log S_{t}
\end{align}
where we use the inequality $1 - \frac{1}{x} \leq \log x$ for positive $x$. This can be quite tight ($L_t \approx \log S_{t}$) when the steps are small relative to $S_{t-1}$, so that $M_{t-1}$ is not much lower than $S_{t-1}$ at the times $L_t$ changes. 
This decomposition is intimately connected with $\log S_t$, as we will see that the martingale $Q_t$ is effectively equal to $\evp{ \log S_{\infty} \mid \cF_t}{} - 1$ (\Cref{thm:logmaxprocess}). 

Notably, $\frac{M_{t}}{S_{t}}$ can be calculated pathwise, so a natural question is if it can be used as a peeking-robust statistic, i.e. if we can reason about its peeked version 
$$ R_t := \min_{s \leq t} \frac{M_{s}}{S_{s}} \geq \min_{s \leq t} Z_{s} $$
which is a nonincreasing process. 
%On all trajectories w.p. 1, $\min_{s \leq t} M_{s} \leq 1$, so that $R_t = \min_{s \leq t} \frac{M_{s}}{S_{s}} \leq \min_{s \leq t} \frac{M_{s}}{S_{t}} \leq \frac{1}{S_t}$ 
The following result %Theorem \ref{thm:normpval} 
shows that $R_t$ can be considered a valid $p$-value at any time horizon.

\begin{theorem}[An alternative $p$-value]
\label{thm:normpval}
With $\mathcal{U}$ denoting a standard $\text{Uniform}([0,1])$ random variable,
\begin{enumerate}[(a)]
    \item
    For any stopping time $\tau \leq \tau_{F}$, $R_{\tau} \succeq \mathcal{U}$. 
    \item
    Define $\rho_{F} := \max \lrsetb{ t \leq \tau_{F} : Z_{t} = \min_{u \leq \tau_{F} } Z_{u} }$. Then $R_t \geq \prp{ \rho_F > t \mid \cF_t}{}$.
    % \item Define $\tau (u) := \min \lrsetb{t: Z_t < u}$ for $u \in (0,1)$. Then $\tau_F \sim \tau (\mathcal{U})$.
\end{enumerate}
\end{theorem}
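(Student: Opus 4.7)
The plan is to prove both parts via the same two-move template: first, reinterpret the event in question as $\{\sigma \leq \tau_F\}$ for a suitable stopping time $\sigma$; second, show that on this event the process must do something in the future whose conditional probability is bounded by the desired threshold.

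For (a), I would define $\sigma := \inf\{u \geq 0 : M_u/S_u \leq s\}$, a stopping time, and observe that $\{R_\tau \leq s\} = \{\sigma \leq \tau\} \subseteq \{\sigma \leq \tau_F\}$ using $\tau \leq \tau_F$. The key step is that on $\{\sigma \leq \tau_F\}$ with $M_\sigma > 0$, some later $u \geq \sigma$ attains $M_u = S_u \geq S_\sigma \geq M_\sigma/s$, so $S_{\geq \sigma} \geq M_\sigma/s$. A conditional form of Theorem~\ref{thm:supermartdecomp}(a) applied to the shifted process $(M_{\sigma+k})_k$ (equivalently, Doob's maximal inequality normalized by $M_\sigma$) bounds $\Pr(S_{\geq \sigma} \geq M_\sigma/s \mid \mathcal{F}_\sigma) \leq s$; integrating gives $\Pr(R_\tau \leq s) \leq s$, equivalent to $R_\tau \succeq \mathcal{U}$.

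For (b), Theorem~\ref{thm:supermartdecomp}(b) gives $Z_s \leq M_s/S_s$, so $c := \min_{s \leq t} Z_s \leq R_t$ and it suffices to show $\Pr(\rho_F > t \mid \mathcal{F}_t) \leq c$. The same template works with $Z$ in place of $M/S$: on $\{\rho_F > t\}$, $Z_{\rho_F}$ equals the global minimum of $Z$ over $[0,\tau_F]$, which is $\leq c$, so the stopping time $\sigma := \inf\{u > t : Z_u \leq c\}$ satisfies $\sigma \leq \rho_F \leq \tau_F$. The concluding step replaces Doob's inequality by the defining identity of the Az\'ema supermartingale, $Z_\sigma = \Pr(\tau_F \geq \sigma \mid \mathcal{F}_\sigma) \leq c$ on $\{\sigma < \infty\}$, and integrating against $\mathcal{F}_t$ gives $\Pr(\sigma \leq \tau_F \mid \mathcal{F}_t) \leq c$, hence $\Pr(\rho_F > t \mid \mathcal{F}_t) \leq c \leq R_t$.

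The main obstacle is conceptual: spotting that Theorem~\ref{thm:supermartdecomp}(a) plays exactly the role in (a) that the defining identity $Z_u = \Pr(\tau_F \geq u \mid \mathcal{F}_u)$ plays in (b) — both convert a future crossing into a probability budget one can integrate. A secondary, technical point is formulating a conditional version of Theorem~\ref{thm:supermartdecomp}(a) on $\mathcal{F}_\sigma$, noting that on $\{M_\sigma = 0\}$ the nonnegative supermartingale is stuck at zero and $\tau_F < \sigma$, so the event $\{\sigma \leq \tau_F\}$ contributes nothing there.
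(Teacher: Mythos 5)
Your argument is correct and follows the same stopping-time template as the paper's proof: reduce the event of interest to $\{\sigma \leq \tau_F\}$ for a suitable hitting time $\sigma$, then bound $\Pr(\tau_F \geq \sigma \mid \cF_\sigma)$ by the chosen threshold and integrate. The differences are in the choice of threshold. For (a), the paper stops at $\tau(u) := \min\{t : Z_t \leq u\}$ and invokes the Az\'{e}ma identity $\Pr(\tau_F \geq \tau(u) \mid \cF_{\tau(u)}) = Z_{\tau(u)} \leq u$; you stop when the observable ratio $M/S$ first drops below $s$ and apply Ville's inequality (Lemma~\ref{lem:ville}, applied at a stopping time) to the post-$\sigma$ process. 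Since the bound $Z_t \leq M_t/S_t$ in Theorem~\ref{thm:supermartdecomp}(b) is itself proved by Ville, the two routes are the same estimate in different clothing; yours has the minor virtue of sidestepping $Z$ altogether in part (a). For (b), the paper thresholds at $R_t$ via the chain $Z_{\rho_F} = \min_{u \leq \tau_F} Z_u \leq R_{\tau_F} \leq R_t$ and the stopping time $\tau_I := \min\{\tau > t : Z_\tau \leq R_t\}$, whereas you threshold at the $\cF_t$-measurable $c := \min_{s\leq t} Z_s \leq R_t$. Your version therefore proves the marginally sharper bound $\Pr(\rho_F > t \mid \cF_t) \leq \min_{s\leq t} Z_s \leq R_t$; the improvement vanishes when $M$ is a martingale, since then $Z_s = M_s/S_s$ and the two thresholds coincide. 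Both parts of your proof are otherwise structurally identical to the paper's, and your technical remarks about $\{M_\sigma = 0\}$ and the conditional form of the maximal inequality at a stopping time are the right points to raise.
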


% \akshay{Here note that prob. of $\frac{M_t}{S_t} \leq u$ happening before hitting the final maximum $\tau_F$, or a pre-imposed limit $T$, is $= u$. This suggests that $R_t$ on the stopped process would be a very nice $p$-value.}

%============================================================================================================

\section{Estimating extrema of martingales}% with any null distribution}
\label{sec:aytest}

For fixed sample sizes, any statistic $T$ with null distribution $\mu$ can be computed from its $p$-value by applying the statistic's inverse complementary CDF $\bar{\mu}^{-1}$ to the $p$-value $P$. 
In this way, we can think of any distribution $\mu$ in terms of a nondecreasing function $g (x) := \bar{\mu}^{-1} (1/x)$ for $x \geq 1$, so that $g \lrp{P}$ corresponds to the statistic $T$. 
In this prototypical case, $T \preceq \mu$. 
Similarly, given a martingale $M$ associated with a robust $p$-value process $H$, the equivalent statistic $g(M) = g \lrp{\frac{1}{H}}$ is dominated by $\mu$. 
% We work directly with a nonnegative supermartingale $M$ standing in for $\frac{1}{H}$. 

Assume $M$ is a martingale and suppose we test a statistic $g^{\mu} (M_t)$ with a process $A_t$. 
The obvious choice $A_t = g^{\mu} (M_t)$ is prone to peeking. 
We instead inoculate $A$ against future peeking by maximizing over the entire trajectory of $A$, and using that as a test statistic. 
We directly estimate the extreme value $\max_{t} g (M_{t}) = g ( S_{\infty} )$ 
-- a quantity robust to peeking -- 
with the process (martingale) $\evp{g ( S_{\infty} ) \mid \cF_t}{} $.\footnote{If the peeker can be assumed to have a limited waiting period of $T$ samples, $S_{\infty}$ can be replaced by $S_{T}$ in this analysis.} %as it is reported by a peeker with unlimited time and resources.

This quantity has a natural motivation, but it depends on the future through $S_{\infty}$, %and the positive bias $g(S_{\infty}) - g(S_t)$ is precisely the additional bias of a peeker with an unlimited time to wait before reporting; 
and confounds attempts at estimation with fixed-sample techniques.
%For fixed time, the inverse CDF of the statistic's null distribution. %, which should not grow too fast so that $g (S)$ is uniformly integrable. 
Nevertheless, we show how to efficiently compute this as a stochastic process (\Cref{thm:AYpotentialdecomp}), and prove that its null distribution is $\preceq \mu$, under a ``good" stopping rule (\Cref{thm:ayprocessproperties}). 
This characterization leads to results which are more generally novel (Section \ref{sec:univay}).

We also study the interplay between the statistic $\evp{g ( S_{\infty} ) \mid \cF_t}{}$ and its own ``peeked" cumulative maximum $\max_{t} \evp{g ( S_{\infty} ) \mid \cF_t}{}$, characterizing it in terms of $\mu$ (\Cref{thm:ayprocessproperties}, \Cref{thm:givenmuoptisay}) and showing that $\evp{g ( S_{\infty} ) \mid \cF_t}{}$.

% for instance with the statistic $\evp{S_{\infty} \mid X_{1:t}}{}$, 
%\textbf{ But in common cases (like when $M_t$ is a random walk with i.i.d. standard normal increments), $S$ is not \emph{integrable}, since $\evp{S_{\infty}}{}$ diverges, preventing usage of many martingale tools. }

\subsection{Estimating the running extremum}
\label{sec:runningmaxtest}

%Armed with the capability to characterize the future maximum as well as observe the past, 
%The part $L_t$ can be thought of as ``$\log (S_t)$".
%we next seek to test generic extremal test statistics. 
We can use the distributional characterization of Theorem \ref{thm:supermartdecomp} to provide insight into the statistic $\evp{g ( S_{\infty} ) \mid \cF_t}{} $ and ways to compute it. 

\begin{theorem}
\label{thm:AYpotentialdecomp}
For any nondecreasing function $g$, denote $\cG (s) := \int_0^{1} g \left( \frac{s}{u} \right) du$ and its derivative $\cG' (s) := \frac{d \cG (s)}{ds} = \int_{s}^{\infty} \frac{dx}{x^2} \lrp{ g (x) - g (s) }$. 
Then $\cG$ is continuous, concave, and nondecreasing. %$g ( S_{\infty} )$ is integrable, 
Also:
\begin{align*}
\evp{g ( S_{\infty} ) \mid \cF_t}{} 
\leq Y_t 
&\stackrel{\textbf{(a)}}{:=} \left( 1 - \frac{M_t}{S_t} \right) g (S_t) + M_t \int_{S_t}^{\infty} \frac{g (x)}{x^2} dx \\
&\stackrel{\textbf{(b)}}{=} \left( 1 - \frac{M_t}{S_t} \right) g (S_t) + \frac{M_t}{S_t} \cG (S_t) \\
&\stackrel{\textbf{(c)}}{=} \cG (S_t) - (S_t - M_t) \cG' (S_t) \\
&\stackrel{\textbf{(d)}}{=} g (S_t) + M_t \cG' (S_t)
\end{align*}
with equality when $M$ is a martingale. Furthermore, $g \geq 0 \implies Y \geq 0$.  
%\textbf{There's a strong converse too (Prop. 4.6b) - uniform integrability of $Y$ iff $\frac{g (x)}{x^2}$}

% \begin{enumerate}[(a)]
% \item
% $\displaystyle \left( 1 - \frac{M_t}{S_t} \right) g (S_t) + M_t \int_{S_t}^{\infty} \frac{g (x)}{x^2} dx$
% \item
% $\displaystyle \left( 1 - \frac{M_t}{S_t} \right) g (S_t) + \frac{M_t}{S_t} \cG (S_t)$
% \item
% $\displaystyle \cG (S_t) - (S_t - M_t) \cG' (S_t)$
% \item
% $\displaystyle g (S_t) + M_t \cG' (S_t)$
% \end{enumerate}
\end{theorem}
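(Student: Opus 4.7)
The plan is to first verify the algebraic chain (a)$=$(b)$=$(c)$=$(d) and the analytic properties of $\cG$, and then derive the main stochastic inequality by applying Theorem \ref{thm:supermartdecomp}(a) to the nondecreasing function $g$. The nonnegativity claim will fall out of form (a) at the end.

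For the algebraic reductions, I would rewrite $\cG$ via the substitution $x = s/u$ in the defining integral, yielding the pathwise identity $\cG(s) = s \int_s^\infty g(x)/x^2 \, dx$. This turns form (a) directly into (b). Differentiating the product gives $\cG'(s) = \int_s^\infty g(x)/x^2 \, dx - g(s)/s$, which matches the integral form stated in the theorem (since $\int_s^\infty dx/x^2 = 1/s$), and equivalently $\cG'(s) = (\cG(s)-g(s))/s$. Substituting this relation into forms (c) and (d) reduces each to (b) with one line of algebra. From the same integral representation of $\cG'$ it is immediate that $\cG' \geq 0$, so $\cG$ is nondecreasing; concavity follows by splitting the difference $\cG'(s_1)-\cG'(s_2)$ for $s_1 < s_2$ into $\int_{s_1}^{s_2}(g(x)-g(s_1))/x^2\,dx + \int_{s_2}^\infty (g(s_2)-g(s_1))/x^2\,dx$, both terms being nonneg by monotonicity of $g$; continuity on $(0,\infty)$ is then automatic for concave functions.

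For the main bound, the key input is Theorem \ref{thm:supermartdecomp}(a), which says that conditional on $\cF_t$, $S_\infty \preceq S_t \max\lrp{1,\,(M_t/S_t)(1/\mathcal{U})}$ with $\mathcal{U} \sim \text{Uniform}([0,1])$, with the dominance an equality in distribution when $M$ is a martingale. Since $g$ is nondecreasing, Definition \ref{defn:fsd}(b) yields
\[
\evp{g(S_\infty) \mid \cF_t}{} \;\leq\; \int_0^1 g\!\lrp{S_t \max(1,\,(M_t/S_t)/u)}\,du.
\]
I would split this integral at the threshold $u^\star = M_t/S_t$, above which the inner maximum is $S_t$ (contributing $(1-M_t/S_t)\,g(S_t)$) and below which it equals $M_t/u$ (contributing, after the substitution $x = M_t/u$, exactly $M_t \int_{S_t}^\infty g(x)/x^2\,dx$). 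Summing reproduces form (a). Equality for martingale $M$ is inherited directly from the equality case in Theorem \ref{thm:supermartdecomp}(a), and when $g \geq 0$ the nonnegativity $Y_t \geq 0$ is immediate from form (a) because $M_t \leq S_t$ and the remaining integrand is nonneg.

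The main obstacle I anticipate is the equality assertion for martingales, which depends on the tightness of the Doob-type bound in Theorem \ref{thm:supermartdecomp}(a) rather than on anything new; I would rely entirely on that earlier result and any regularity hypotheses implicit there. A secondary point to check is integrability of $\int_s^\infty g(x)/x^2\,dx$ on the relevant range, but this is automatic whenever $\evp{g(S_\infty)\mid \cF_t}{}$ is finite, which is the natural standing assumption.
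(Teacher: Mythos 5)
Your proposal is correct and follows essentially the same route as the paper: both apply Theorem~\ref{thm:supermartdecomp}(a) (via the stochastic dominance of $S_{\geq t}$ by $M_t/\mathcal{U}$) together with monotonicity of $g$ to obtain the bound $\evp{g(S_\infty)\mid\cF_t}{}\leq Y_t$, and then verify the chain (a)--(d) by the substitution $x=s/u$ and the identity $\cG'(s)=(\cG(s)-g(s))/s$. The only cosmetic difference is that you reduce (c) and (d) to (b) via that scalar identity rather than by the paper's direct integral manipulations, and you give a slightly more explicit argument for concavity of $\cG$; neither changes the substance.
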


Theorem \ref{thm:AYpotentialdecomp}$(a)$ shows exactly which choices of $g$ are appropriate, as $\evp{g ( S_{\infty} ) }{} $ can only be bounded if $\frac{g (x)}{x^2}$ is integrable away from zero. 
This paper assumes this hereafter:

\begin{assumption} %[Integrability of tails of $g$]
\label{ass:tailintegrab}
$\frac{g (x)}{x^2}$ has a finite integral on any closed interval away from zero.
\end{assumption}

Theorem \ref{thm:AYpotentialdecomp} characterizes the test statistic $Y$, the \emph{Az\'{e}ma-Yor (AY) process} of $M$ with respect to $g$ \citep{AY79}. Thm. \ref{thm:AYpotentialdecomp}(b) can be interpreted as an expectation over two outcomes, using Theorem \ref{thm:supermartdecomp}(b). With probability $1 - \frac{M_t}{S_t}$, the cumulative maximum is not exceeded in the future ($\tau_F \leq t$), so $g (S_{\infty}) = g (S_t)$. Alternatively with probability $\frac{M_t}{S_t}$, the cumulative maximum is exceeded in the future ($\tau_F > t$), and the conditional expectation of $g (S_{\infty})$ in this case is $\cG (S_t)$, using Theorem \ref{thm:supermartdecomp} to get a precise idea of the lookahead maximum from the present.

The AY process $Y$, constructed by Theorem \ref{thm:AYpotentialdecomp} using any $g$, has some remarkable properties that further motivate its use.

\begin{lemma}[Properties of AY processes]
\label{lem:aydifferences}
Define the Bregman divergence $D_F (a, b) := F(a) - F(b) - (a-b) F'(b) \geq 0$ for any convex function $F$. Any AY process $Y$ defined as in Theorem \ref{thm:AYpotentialdecomp} is a supermartingale. The following relations hold pathwise for all $t$: 
\begin{enumerate}[a)]
    \item
    $\displaystyle Y_t \geq g (S_t)$ 
    \item 
    $Y_{t} - Y_{t-1} 
    = (M_{t} - M_{t-1}) \cG' (S_{t-1}) - D_{- \cG} (S_{t}, S_{t-1}) $
    \item
    $\displaystyle \max_{s \leq t} Y_s = \cG (S_t) \geq Y_{t} \geq \cG (M_t)$
    \item
    For any stochastic process $A$, if $A_u \geq \cG(M_u)$ for all $u$, then $\displaystyle \max_{s \leq t} A_s \geq \max_{s \leq t} Y_s$. 
\end{enumerate}
\end{lemma}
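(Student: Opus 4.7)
The plan is to lean on the four equivalent forms of $Y_t$ given by Theorem \ref{thm:AYpotentialdecomp}, together with two structural facts: $\cG$ is concave and nondecreasing, so $\cG' \geq 0$ and the Bregman divergence $D_{-\cG}(a,b) \geq 0$. I would address the pathwise identities first, then read off the supermartingale property as an immediate consequence.

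\textbf{Parts (a) and (c).} From form (d), $Y_t = g(S_t) + M_t \cG'(S_t)$, and since $M_t \geq 0$ and $\cG'(S_t) \geq 0$, this already gives $Y_t \geq g(S_t)$, proving (a). From form (c), $Y_t = \cG(S_t) - (S_t - M_t)\cG'(S_t)$; since $S_t \geq M_t$ and $\cG' \geq 0$, one gets $Y_t \leq \cG(S_t)$, while concavity of $\cG$ gives $\cG(M_t) \leq \cG(S_t) + (M_t - S_t)\cG'(S_t) = Y_t$. This handles the two pointwise inequalities in (c). For $\max_{s \leq t} Y_s = \cG(S_t)$, let $\sigma_t := \max\{s \leq t : M_s = S_s\}$ be the last time $M$ touched its running maximum. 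At $s=\sigma_t$ we have $S_{\sigma_t} - M_{\sigma_t} = 0$, so $Y_{\sigma_t} = \cG(S_{\sigma_t}) = \cG(S_t)$ (since $S$ is constant between $\sigma_t$ and $t$), and for all other $s \leq t$, $Y_s \leq \cG(S_s) \leq \cG(S_t)$ by the bound just proved and monotonicity of $\cG$.

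\textbf{Part (b).} The cleanest route is a two-case pathwise computation. If $S_t = S_{t-1}$, then $D_{-\cG}(S_t,S_{t-1}) = 0$ and form (d) gives $Y_t - Y_{t-1} = (M_t - M_{t-1})\cG'(S_{t-1})$, matching the claim. If $S_t > S_{t-1}$, then necessarily $M_t = S_t$, so form (c) gives $Y_t = \cG(S_t)$, while $Y_{t-1} = \cG(S_{t-1}) - (S_{t-1} - M_{t-1})\cG'(S_{t-1})$; expanding $D_{-\cG}(S_t,S_{t-1}) = \cG(S_{t-1}) - \cG(S_t) + (S_t - S_{t-1})\cG'(S_{t-1})$ and using $M_t - S_t = 0$, both sides of the claimed identity collapse to $\cG(S_t) - \cG(S_{t-1}) + (S_{t-1} - M_{t-1})\cG'(S_{t-1})$. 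Taking conditional expectations of this identity yields the supermartingale property: $\cG'(S_{t-1})$ is $\cF_{t-1}$-measurable and $\EV[M_t - M_{t-1} \mid \cF_{t-1}] \leq 0$ (since $M$ is a (super)martingale), so the drift term is $\leq 0$, and subtracting the nonnegative Bregman term gives $\EV[Y_t - Y_{t-1} \mid \cF_{t-1}] \leq 0$.

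\textbf{Part (d).} This falls out of (c): for the same $\sigma_t$ as above, $M_{\sigma_t} = S_{\sigma_t} = S_t$, hence $\max_{s \leq t} A_s \geq A_{\sigma_t} \geq \cG(M_{\sigma_t}) = \cG(S_t) = \max_{s \leq t} Y_s$.

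The one nontrivial step is the case split in (b), where one has to notice that an increase in the running maximum forces $M_t = S_t$; everything else is algebra on the three equivalent forms of $Y_t$ plus the fact that $-\cG$ is convex. I do not anticipate any real obstacle beyond bookkeeping.
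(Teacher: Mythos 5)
Your proof is correct and follows essentially the same approach as the paper's: parts (a) and the pointwise inequalities in (c) come from forms (c) and (d) of Theorem \ref{thm:AYpotentialdecomp} plus concavity and monotonicity of $\cG$; part (b) is the same two-case pathwise computation (you split on $S_t > S_{t-1}$ where the paper splits on $M_t = S_t$, but these agree after checking the overlap case $M_t = S_t = S_{t-1}$, where both reduce to the same identity); the max identity and part (d) both reduce to evaluating at record times of $M$, and the supermartingale conclusion is read off from (b) in both.
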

% As we will see, these properties characterize $Y$ optimally and uniquely in various ways.

\subsection{Consequences and examples}

\begin{theorem}
\label{thm:logmaxprocess}
Define $ Q_t := \sum_{i=1}^{t} \lrp{ \frac{ M_{i} - M_{i-1} }{S_{i}} } $ as in \Cref{thm:supermartdecomp}(c). 
Then $Q_t \leq \evp{\log (S_\infty) \mid \cF_t}{} - 1 $. 
Here the inequality is as tight as that in \eqref{eq:logmaxupp}. 
\end{theorem}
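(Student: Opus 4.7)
The plan is to combine a simple telescoping identity with the specialization of Theorem \ref{thm:AYpotentialdecomp} to the logarithm, using the inequality already displayed in equation \eqref{eq:logmaxupp} as the only slack.

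First, I would establish the pathwise identity
\begin{align*}
Q_t - L_t
&= \sum_{i=1}^{t} \frac{M_{i} - M_{i-1}}{S_{i}} - \sum_{i=1}^{t} M_{i-1}\lrp{\frac{1}{S_{i-1}} - \frac{1}{S_{i}}} \\
&= \sum_{i=1}^{t} \lrp{\frac{M_{i}}{S_{i}} - \frac{M_{i-1}}{S_{i-1}}} = \frac{M_t}{S_t} - 1,
\end{align*}
which is just algebra and holds regardless of whether $M$ is a martingale. Thus $Q_t = \frac{M_t}{S_t} - 1 + L_t$ pathwise.

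Next, I would invoke \eqref{eq:logmaxupp}, which gives $L_t \leq \log S_t$, so that $Q_t \leq \frac{M_t}{S_t} - 1 + \log S_t$. This is the only non-trivial inequality in the argument, which establishes the claimed tightness: the gap between $Q_t$ and $\evp{\log S_\infty \mid \cF_t}{} - 1$ is exactly the gap in \eqref{eq:logmaxupp}.

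Finally, I would specialize Theorem \ref{thm:AYpotentialdecomp} with $g(x) = \log x$. A short computation gives
\[
\cG(s) = \int_0^1 \log(s/u)\, du = \log s - \int_0^1 \log u \, du = \log s + 1, \qquad \cG'(s) = \frac{1}{s}.
\]
Since $M$ is a martingale (per the standing assumption of Section \ref{sec:aytest}), the equality case of Theorem \ref{thm:AYpotentialdecomp}(d) gives
\[
\evp{\log S_\infty \mid \cF_t}{} = g(S_t) + M_t \cG'(S_t) = \log S_t + \frac{M_t}{S_t}.
\]
Combining with the bound from the previous step yields $Q_t \leq \evp{\log S_\infty \mid \cF_t}{} - 1$, and tightness is inherited directly from \eqref{eq:logmaxupp}. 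There is no real obstacle here; the main thing to be careful about is just the evaluation of $\cG$ for $g = \log$ (in particular that $\int_0^1 \log u \, du = -1$ produces the ``$+1$'' that matches the ``$-1$'' on the right-hand side of the claim).
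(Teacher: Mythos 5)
Your proof is correct and mirrors the paper's argument: both specialize Theorem \ref{thm:AYpotentialdecomp} to $g = \log$ (so $\cG(s) = \log s + 1$, giving $\evp{\log S_\infty \mid \cF_t}{} = \log S_t + \frac{M_t}{S_t}$ in the martingale case) and combine this with the bound $L_t \leq \log S_t$ from \eqref{eq:logmaxupp}, which is the sole source of slack. The only cosmetic difference is that you invoke the raw pathwise telescoping identity $Q_t - L_t = \frac{M_t}{S_t} - 1$ directly, whereas the paper routes through the statement $1 + Q_t = Z_t + L_t$ of Theorem \ref{thm:supermartdecomp}(c) together with the martingale equality $Z_t = \frac{M_t}{S_t}$ — the same computation, just packaged through $Z_t$.
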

\begin{proof}
First, note that $1 + Q_t = Z_t + L_t$ from \Cref{thm:supermartdecomp}(c). 
Define $ Q_t := \sum_{i=1}^{t} \lrp{ \frac{ M_{i} - M_{i-1} }{S_{i}} } $ from 
Using \Cref{thm:AYpotentialdecomp} with $g(x) = \log (x)$, 
we have $\cG (x) = \log (x) + 1$, so
\begin{align*}
\evp{\log (S_\infty) \mid \cF_t}{} 
&= \log (S_t) + 1 - (S_t - M_t) \frac{1}{S_t}
= \log (S_t) + \frac{M_t}{S_t}
\geq L_t + Z_t
= 1 + Q_t
\end{align*}
\end{proof}

\Cref{thm:AYpotentialdecomp}(d) implies a simple formula for the mean of the ultimate maximum  $\evp{ g( S_\infty ) }{}$. 
\begin{corollary}
With $g$ and $\cG$ defined as in \Cref{thm:AYpotentialdecomp}, $\evp{ g( S_\infty ) }{} = g( 1 ) + \cG' (1)$. 
\end{corollary}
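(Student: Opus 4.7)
The plan is to specialize part (d) of Theorem \ref{thm:AYpotentialdecomp} to the initial time $t = 0$ and simply read off both sides. Since $M$ is a martingale throughout Section \ref{sec:aytest}, Theorem \ref{thm:AYpotentialdecomp} yields the exact equality
$$\evp{g(S_\infty) \mid \cF_t}{} \;=\; Y_t \;=\; g(S_t) + M_t \cG'(S_t)$$
for every $t \geq 0$, so in particular the identity is available at $t=0$ with no stopping-time or limiting argument required.

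At $t = 0$, the standing normalization gives $M_0 = 1$, and hence $S_0 = \max_{s \leq 0} M_s = M_0 = 1$. Substituting into the formula above yields $Y_0 = g(1) + \cG'(1)$, a deterministic constant. Taking unconditional expectations on both sides of $\evp{g(S_\infty) \mid \cF_0}{} = Y_0$ (by the tower property, regardless of whether $\cF_0$ is trivial), one obtains $\evp{g(S_\infty)}{} = g(1) + \cG'(1)$, which is the desired identity.

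There is essentially no obstacle beyond verifying integrability, so that the conditional expectation $\evp{g(S_\infty) \mid \cF_0}{}$ makes sense and is finite. This is handled by Assumption \ref{ass:tailintegrab} together with the distributional bound $S_\infty \preceq 1/\mathcal{U}$ from Theorem \ref{thm:supermartdecomp}(a): the latter implies $\evp{g(S_\infty)}{} \leq \int_0^1 g(1/u)\,du = \cG(1) < \infty$ under the assumed tail condition. Once finiteness is in hand, the corollary is a one-line evaluation of Theorem \ref{thm:AYpotentialdecomp}(d) at $t = 0$.
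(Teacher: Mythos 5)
Your proof is correct and takes exactly the route the paper intends: evaluate the martingale-case equality of Theorem \ref{thm:AYpotentialdecomp}(d) at $t=0$, use $M_0 = S_0 = 1$, and take expectations via the tower property. The paper gives no separate proof for this corollary, treating it as an immediate specialization, and your added remarks on integrability (via Assumption \ref{ass:tailintegrab} and $S_\infty \preceq 1/\mathcal{U}$) are a sound but optional sanity check.
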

%\akshay{explore consequences of using $g(x) = min(x, M)$}

\subsection{Bounding the null distribution}
\label{sec:designnull}

Next, we characterize the null distribution of the test statistic process $Y$.

Our stated motivation for $g$ in Sec. \ref{sec:runningmaxtest} involves a distribution $\mu$, which plays the role of the null in the fixed-sample case. We proceed to specify a stopping time $\tau^{\mu}$ such that the stopped test statistic satisfies the same null guarantee as the fixed-sample one: $Y_{\tau^{\mu}} \preceq \mu$. Our development depends on some properties of $\mu$.

\begin{definition}
\label{defn:supq}
A real-valued distribution $\mu$ has a \textbf{complementary CDF} $\bar{\mu} (x) := \prp{X \geq x}{X \sim \mu}$, a \textbf{tail quantile} function $\bar{\mu}^{-1} (\xi) := \min \lrsetb{ x: \bar{\mu} (x) < \xi }$, and \textbf{barycenter} function $\psi_{\mu} (x) = \evp{X \mid X \geq x }{\mu}$.
Its \textbf{superquantile} function is $\cGar^{\mu} (\xi) := \psi_{\mu} \lrp{ \bar{\mu}^{-1} (\xi) } = \frac{1}{\xi} \int_{0}^{\xi} \bar{\mu}^{-1} (\lambda) d \lambda $, and its \textbf{Hardy-Littlewood transform} is the distribution $\mu^{\textsc{HL}} := \cGar^{\mu} (\mathcal{U})$ for a $[0,1]$-uniform random variable $\mathcal{U}$ \citep{CEO12, RockafellarRoyset14}. 
$\mu$ is associated with a nondecreasing function $ g^{\mu} (x) = \bar{\mu}^{-1} (1/x) $ with corresponding \textbf{future loss potential} $\cG^{\mu} (x) := \int_0^{1} g^{\mu} \left( \frac{x}{u} \right) du = \cGar^{\mu} (1/x)$.
\end{definition}

(Hereafter, superscripts of $\mu$ will be omitted when clear from context.) The characterization provided by Theorem \ref{thm:AYpotentialdecomp} precisely characterizes the mediating function $g$'s effect on the distribution of the given null process $Y$, fully specifying its distribution. 

\begin{theorem}
\label{thm:ayprocessproperties}
Fix a $\mu$ and define $\displaystyle \tau^{\mu} := \min \lrsetb{ t : g^{\mu} (S_t) \geq Y_{t} } $. Then $\displaystyle \max_{s \leq \tau^{\mu} } Y_s \preceq \mu^{\textsc{HL}}$, and $Y_{\tau^{\mu}} \preceq \mu$. 
\end{theorem}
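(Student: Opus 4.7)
The main idea is to reduce both claims to the same fact about $S_{\tau^\mu}$, namely that it is pathwise bounded by $S_\infty$, and then invoke Theorem~\ref{thm:supermartdecomp}(a), which gives $S_\infty \preceq 1/\mathcal{U}$. Everything else is monotone bookkeeping via the definitions in Definition~\ref{defn:supq}.

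First I would observe that $\tau^\mu$ is a stopping time (both $S_t$ and $Y_t$ are $\cF_t$-measurable), and that Lemma~\ref{lem:aydifferences}(a) gives $Y_t \geq g^\mu(S_t)$ for all $t$; hence at $\tau^\mu$ we get the equality $Y_{\tau^\mu} = g^\mu(S_{\tau^\mu})$. Combined with Lemma~\ref{lem:aydifferences}(c), which yields $\max_{s \leq \tau^\mu} Y_s = \cG^\mu(S_{\tau^\mu})$, both quantities of interest are now explicit functions of $S_{\tau^\mu}$.

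Next I would translate the Theorem~\ref{thm:supermartdecomp}(a) bound. Since $S$ is nondecreasing, $S_{\tau^\mu} \leq S_\infty$ pathwise, and applying the (nonincreasing) map $x \mapsto 1/x$ to $S_\infty \preceq 1/\mathcal{U}$ yields $1/S_{\tau^\mu} \succeq 1/S_\infty \succeq \mathcal{U}$. For the first conclusion, using $\cG^\mu(x) = \cGar^\mu(1/x)$ from Definition~\ref{defn:supq}, we get $\max_{s \leq \tau^\mu} Y_s = \cGar^\mu(1/S_{\tau^\mu})$, and because $\cGar^\mu$ is nonincreasing (as $\bar\mu^{-1}$ is nonincreasing and $\psi_\mu$ nondecreasing), the stochastic dominance $1/S_{\tau^\mu} \succeq \mathcal{U}$ flips to $\cGar^\mu(1/S_{\tau^\mu}) \preceq \cGar^\mu(\mathcal{U}) \stackrel{d}{=} \mu^{\textsc{HL}}$. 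For the second conclusion, $Y_{\tau^\mu} = g^\mu(S_{\tau^\mu}) = \bar\mu^{-1}(1/S_{\tau^\mu})$; nonincreasingness of $\bar\mu^{-1}$ likewise gives $\bar\mu^{-1}(1/S_{\tau^\mu}) \preceq \bar\mu^{-1}(\mathcal{U})$, and the quantile transform identifies the right-hand side in distribution with $\mu$.

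\textbf{Main obstacle.} The genuinely delicate point is handling the event $\{\tau^\mu = \infty\}$, on which one must interpret $Y_{\tau^\mu}$ and $S_{\tau^\mu}$ as limits; since $Y$ is a nonnegative supermartingale (Lemma~\ref{lem:aydifferences}) and $S$ is monotone, both limits exist almost surely, and the pathwise bound $S_{\tau^\mu} \leq S_\infty$ carries over without change. A secondary subtlety is verifying the direction flips under nonincreasing composition: $X \preceq Y$ and $h$ nonincreasing give $h(X) \succeq h(Y)$, which I would justify once from Definition~\ref{defn:fsd}(a) by comparing $\prp{h(X) \geq c}{} = \prp{X \leq h^{-1}(c)}{}$. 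The rest is a direct chain of monotone substitutions and is essentially bookkeeping.
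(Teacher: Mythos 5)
Your proof is correct and takes essentially the same route as the paper: reduce both claims to monotone functions of $S_{\tau^\mu}$ — namely $\max_{s\leq\tau^\mu} Y_s = \cG^\mu(S_{\tau^\mu})$ and $Y_{\tau^\mu} = g^\mu(S_{\tau^\mu})$ via Lemma~\ref{lem:aydifferences} and the stopping rule — and then push the dominance $S_\infty \preceq 1/\mathcal{U}$ from Theorem~\ref{thm:supermartdecomp}(a) through those transforms. As a small bonus, you correctly note that $\cGar^\mu$ is nonincreasing (the paper's phrase ``the $\cGar$ function is nondecreasing'' is a slip: it is $\cG^\mu(x)=\cGar^\mu(1/x)$ that is nondecreasing), and you at least flag the $\{\tau^\mu=\infty\}$ case, which the paper also leaves informal.
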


% This specifies a family of tests of functions of running maxima $g (S_t)$, as we initially sought. 
% The mechanism is listed as Alg. \ref{alg:aytest}.

% \begin{algorithm}[ht!]
%   \caption{Mechanism to estimate peeking on fixed-horizon test statistics}
%   \label{alg:aytest}
% \begin{algorithmic}
% \State \textbf{Given}: test statistic distribution $\mu$; test $H$-value with supermartingale $M_t := \frac{1}{H_t} \geq 0$
% \State \textbf{Set} $M_0 := \evp{f (X_{1:n}) }{} = 0$
% \For{$t = 1, 2, \dots$}
% \State \textbf{Sample} $X_t$ and \textbf{estimate} $M_t = \evp{f (X_{1:n}) \mid X_{1:t} }{}$
% \EndFor
% \end{algorithmic}
% \end{algorithm}
%============================================================================================================

\begin{theorem}
\label{thm:givenmuoptisay}[see also \cite{GM88}]
For any distribution $\mu$, nonnegative martingale $A$, and stopping time $\tau$, if $A_{\tau} \preceq \mu$, then $\displaystyle \max_{s \leq \tau} A_{s} \preceq \mu^{\textsc{HL}} $. 
%\textbf{ Also, $\evp{ h (Y_{\infty}) }{} \geq \evp{h (A_{\infty})}{}$ for all concave nondecreasing twice-differentiable $h \geq 0$. }
\end{theorem}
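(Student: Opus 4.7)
The plan is to combine Doob's maximal inequality for nonnegative martingales with the rearrangement-based definition of $\mu^{\textsc{HL}}$, essentially the classical Dubins--Gilat upper bound on the maximum distribution of a martingale with prescribed terminal law. Let $M^*_\tau := \max_{s \leq \tau} A_s$ and let $\nu$ denote the law of $A_\tau$, so $\nu \preceq \mu$ by hypothesis. Fix $c > 0$ and set $p := \prp{M^*_\tau \geq c}{}$; the goal is to show $p \leq \prp{\mu^{\textsc{HL}} \geq c}{}$. First, I would apply Doob's maximal inequality to $A$ stopped at $\tau \wedge n$, then send $n \to \infty$ using the monotonicity of the events $\{M^*_{\tau \wedge n} \geq c\}$ and the integrability of $A_\tau$ (which is inherited from $A_\tau \preceq \mu$ once one assumes the implicit finiteness of $\cGar^{\mu}$). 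This yields $c\, p \leq \evp{A_\tau \mathbf{1}[M^*_\tau \geq c]}{}$.

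Second, I would bound the right side by rearrangement. For any event $E$ with $\pr{E} = p$, the Hardy--Littlewood inequality gives $\evp{A_\tau \mathbf{1}_E}{} \leq \int_0^p \bar{\nu}^{-1}(\lambda)\, d\lambda = p \cdot \cGar^{\nu}(p)$, with equality on $E = \{A_\tau \geq \bar{\nu}^{-1}(p)\}$. Third, I would transfer from $\nu$ to $\mu$: the dominance $\nu \preceq \mu$ is equivalent to $\bar\nu(x) \leq \bar\mu(x)$ pointwise, which implies $\bar{\nu}^{-1}(\lambda) \leq \bar{\mu}^{-1}(\lambda)$ for all $\lambda \in (0,1)$, hence $\cGar^{\nu}(p) \leq \cGar^{\mu}(p)$. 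Chaining with the Doob step gives $c \leq \cGar^{\mu}(p)$.

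Finally, I would invert. Since $\bar{\mu}^{-1}$ is nonincreasing, so is its running average $\cGar^{\mu}$ on $(0,1]$; therefore $c \leq \cGar^{\mu}(p)$ rearranges to $p \leq \sup\{\xi : \cGar^{\mu}(\xi) \geq c\}$. By the definition $\mu^{\textsc{HL}} := \cGar^{\mu}(\mathcal{U})$ in \Cref{defn:supq}, applied to the nonincreasing function $\cGar^{\mu}$, that supremum equals $\prp{\mu^{\textsc{HL}} \geq c}{}$, which establishes $M^*_\tau \preceq \mu^{\textsc{HL}}$ for every $c$. The main obstacle is the rearrangement step: one must argue carefully that $\evp{A_\tau \mathbf{1}_E}{}$ on an event of fixed probability $p$ is maximized by a top-tail event of $A_\tau$, which requires a short truncation/limiting argument when $\nu$ has atoms at the quantile $\bar{\nu}^{-1}(p)$. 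The Doob step similarly needs a routine approximation via $\tau \wedge n$, but no uniform integrability beyond what $A_\tau \preceq \mu$ already supplies.
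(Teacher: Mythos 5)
Your argument is correct, and it reaches the conclusion by a genuinely different route than the paper. The paper routes through the variational (call-option) characterization of the Hardy--Littlewood transform, namely $\bar{\mu}^{\textsc{HL}} (y) = \min_{z>0} \frac{1}{z} \evp{(X - (y-z))^{+} }{X \sim \mu}$ (Proposition \ref{prop:varhltrans}), pairing it with an ad hoc AY process built from the kinked map $\cG(z) = (z-y)^{+}/(y-K)$ and then optimizing over $K$. Your proof instead works primally: Doob's maximal inequality on the stopped martingale, then the Hardy--Littlewood rearrangement bound $\evp{A_\tau \mathbf{1}_E}{} \leq p\,\cGar^{\nu}(p)$ for events of mass $p$, then the transfer $\cGar^{\nu}\leq\cGar^{\mu}$ from monotonicity of tail quantiles under first-order dominance, then inversion using that $\cGar^{\mu}$ is continuous and nonincreasing. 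These are dual presentations of the same estimate: the superquantile integral $\int_0^p\bar\mu^{-1}$ and the minimized call-option expectation $\min_K\frac{1}{y-K}\evp{(X-K)^+}{\mu}$ are Fenchel conjugates of each other. Your route is the classical Blackwell--Dubins / Dubins--Gilat argument for the running maximum of a martingale with a given terminal law, whereas the paper follows the robust-pricing style of \cite{brown2001robust, CEO12}. Each buys something: yours is more elementary and makes the appeal to rearrangement explicit, while the paper's keeps everything inside the AY-process formalism it has already set up, which makes it feel more self-contained relative to \Cref{thm:AYpotentialdecomp} and \Cref{lem:aydifferences}.

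One caveat you half-noticed is real and shared by both proofs. The passage from the truncated bound at $\tau \wedge n$ to the unbounded $\tau$ needs the stopped martingale $A_{\cdot\wedge\tau}$ to be uniformly integrable (equivalently $\evp{A_\tau}{} = A_0$): monotone convergence handles the indicator but not $A_{\tau\wedge n}$ itself, and Fatou points the wrong way. Without that hypothesis the statement is literally false, e.g.\ a nonnegative martingale started at $1$ that converges a.s.\ to $0$ has $A_\tau = 0 \preceq \delta_0$ yet an unbounded running maximum. The paper's proof silently relies on the same regularity (it needs optional stopping for the submartingale $(A-K)^+$ at an intermediate hitting time). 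So you should state the UI assumption, but it does not distinguish your approach from the paper's.
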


\subsection{Universality}
\label{sec:univay}

Having derived the AY process for any nonnegative supermartingale $M$, we have introduced a number of perspectives on its favorable properties and usefulness as a test statistic (Thm. \ref{thm:AYpotentialdecomp}, Lemma \ref{lem:aydifferences}). This section casts those earlier developments more powerfully, with a converse result: any stochastic process can be viewed as an AY-like process. We know this to be only a loose solution because $Y$ is a strict supermartingale even when $M$ is a martingale (by Lemma \ref{lem:aydifferences}). Instead, a recentered version of this process is appropriate, satisfying two important difference equations pathwise. 

\begin{lemma}
\label{lem:bachelier}
Given any process $M$ and continuous concave nondecreasing nonnegative $\cG$, there is an a.s. unique process $B$ with $B_0 = \cG (1)$ such for all $t$, 
\begin{equation}`
\label{eq:bacheq}
B_{t} - B_{t-1} = (M_{t} - M_{t-1}) \cG' (S_{t-1})
\qquad \text{and} \qquad
\max_{s \leq t} B_s - B_t = (S_t - M_t) \cG' (S_t)
\end{equation}
Due to \eqref{eq:bacheq}, if $M$ is a nonnegative (super)martingale respectively, so is $B$. 
For $t \geq 0$, $B$ is defined by 
\begin{align}
\label{eq:mmdecomp-rev}
B_t := \cG (S_t) - (S_t - M_t) \cG' (S_t) + \sum_{s=1}^{t} D_{- \cG} (S_{s}, S_{s-1})
\end{align}
\end{lemma}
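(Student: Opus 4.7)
The plan is to take the explicit formula in \eqref{eq:mmdecomp-rev} as the \emph{definition} of $B$ and then verify four things: the initial value, the two pathwise identities in \eqref{eq:bacheq}, uniqueness, and the inherited (super)martingale and nonnegativity properties. Since $M_0 = S_0 = 1$, the drawdown correction $(S_0 - M_0)\cG'(S_0)$ vanishes and the sum is empty, so $B_0 = \cG(1)$, matching the stated initial condition.

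For the first identity, I would expand $B_t - B_{t-1}$ directly from \eqref{eq:mmdecomp-rev}, using the unfolded form $D_{-\cG}(S_t, S_{t-1}) = \cG(S_{t-1}) - \cG(S_t) + (S_t - S_{t-1})\cG'(S_{t-1})$. After the $\cG(S_t)$ and $\cG(S_{t-1})$ terms cancel, what remains is $(S_t - M_{t-1})\cG'(S_{t-1}) - (S_t - M_t)\cG'(S_t)$. A case split closes it: if $S_t = S_{t-1}$, then $\cG'(S_t) = \cG'(S_{t-1})$ and the expression collapses to $(M_t - M_{t-1})\cG'(S_{t-1})$; if $S_t > S_{t-1}$, then $M_t = S_t$, so the second term vanishes and the first becomes $(M_t - M_{t-1})\cG'(S_{t-1})$.

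The main obstacle will be the second, running-max identity in \eqref{eq:bacheq}. Let $\sigma_t := \max \lrsetb{ s \leq t : M_s = S_s }$ denote the most recent time by which $M$ has attained its running maximum. I plan to show $\max_{s \leq t} B_s = B_{\sigma_t}$ using only the first identity, via two pathwise observations. First, between consecutive max-attainment times $\sigma < \sigma'$ the increments telescope to $B_{\sigma'} - B_{\sigma} = (S_{\sigma'} - S_{\sigma})\cG'(S_{\sigma}) \geq 0$, since $\cG' \geq 0$ by monotonicity of $\cG$, so $B$ moves nondecreasingly across new maxima. Second, for $u \in (\sigma_t, t]$ we have $S_{u-1} = S_t$, so telescoping again gives $B_u - B_{\sigma_t} = (M_u - S_t)\cG'(S_t) \leq 0$, so $B$ is nonincreasing after $\sigma_t$. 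Plugging $s = \sigma_t$ into \eqref{eq:mmdecomp-rev} and observing that $S_s = S_{s-1}$ for $s > \sigma_t$ zeroes the remaining Bregman terms, so $B_{\sigma_t} = \cG(S_t) + \sum_{s=1}^{t} D_{-\cG}(S_s, S_{s-1})$; subtracting $B_t$ from \eqref{eq:mmdecomp-rev} produces exactly $(S_t - M_t)\cG'(S_t)$.

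Uniqueness is then immediate: \eqref{eq:bacheq} is a pathwise first-order recurrence, so $B_0 = \cG(1)$ determines the whole process. The (super)martingale inheritance follows from the first identity: since $\cG'(S_{t-1}) \geq 0$ is $\cF_{t-1}$-measurable, $\evp{B_t - B_{t-1} \mid \cF_{t-1}}{} = \cG'(S_{t-1}) \, \evp{M_t - M_{t-1} \mid \cF_{t-1}}{}$ inherits the sign of the $M$-increments. Finally, nonnegativity of $B$ follows from concavity of $\cG$: the tangent at $S_t$ lies above $\cG$, giving $\cG(M_t) \leq \cG(S_t) - (S_t - M_t)\cG'(S_t)$, and substituting into \eqref{eq:mmdecomp-rev} yields $B_t \geq \cG(M_t) + \sum_{s=1}^{t} D_{-\cG}(S_s, S_{s-1}) \geq 0$ since $\cG \geq 0$ on $[0,\infty)$, $M \geq 0$, and Bregman divergences are nonnegative.
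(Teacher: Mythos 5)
Your proof is correct, and it takes a genuinely different (and more self-contained) route than the paper's. The paper proves \Cref{lem:bachelier} by observing that $B_t = Y_t + \sum_{s=1}^{t} D_{-\cG}(S_s, S_{s-1})$ and then importing the increment and running-maximum identities already established for the Az\'ema--Yor process $Y$ in \Cref{lem:aydifferences}: the first part of \eqref{eq:bacheq} follows from \Cref{lem:aydifferences}(b), and the running-max identity follows from $\max_{s\leq t} Y_s = \cG(S_t)$ in \Cref{lem:aydifferences}(c) together with the fact that the Bregman sum only increments at the record times of $M$. You instead verify both identities directly from the defining formula \eqref{eq:mmdecomp-rev}, without ever mentioning $Y$. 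Your case split ($S_t = S_{t-1}$ vs.\ $S_t > S_{t-1}$, using $M_t = S_t$ in the latter case) is exactly the right way to close the increment identity. For the running-max identity, the argument via $\sigma_t := \max\{s \leq t: M_s = S_s\}$ --- showing $B$ climbs across record times and declines afterward, then evaluating $B_{\sigma_t}$ explicitly --- is the key contribution: it replaces the paper's reliance on \Cref{lem:aydifferences}(c) with a direct unimodality argument between record times. That observation is actually somewhat more explicit than the paper's step $\max_{s\leq t}(Y_s + A_s) = \max_{s\leq t} Y_s + A_t$, which is not true for arbitrary nondecreasing $A$ and implicitly relies on the same synchronization of maxima that you spell out. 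Your nonnegativity and (super)martingale arguments match the intent of the paper (concavity of $\cG$ plus nonnegativity of $D_{-\cG}$; $\cF_{t-1}$-measurability and nonnegativity of $\cG'(S_{t-1})$). The trade-off is length: the paper's proof is two lines if you grant \Cref{lem:aydifferences}, while yours is longer but stands alone.
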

Lemma \ref{lem:bachelier} says that $B$, a bias-corrected version of $Y$ (w.r.t. $M$), is a ``damped" version of $M$ with variation modulated by the positive nonincreasing function $\cG' (S_{t-1})$. This result couples the entire evolutions of $M$ and $B$, so after fixing initial conditions we can derive a unique decomposition of any process $B$ in terms of a martingale $M$ and its cumulative maximum $S$. 

\begin{theorem}[Martingale-max (MM) Decomposition]
\label{thm:revbachelier}
Fix any continuous, concave, strictly increasing, nonnegative $\cG$. Any process $B$ with $B_0 = \cG (1)$ can be uniquely (a.s.) decomposed in terms of a ``variation process" $M$ and its running maximum $S$, such that $M_0 = S_0 = 1$ and \eqref{eq:bacheq} holds. % and $M, \cG$ are defined as in Lemma \ref{lem:bachelier}. 
The processes $M_t$ and $S_t$ are defined for any $t \geq 1$ inductively by 
\begin{equation}
\label{eq:revbachelier}
M_t = 1 + \sum_{s=1}^{t} \frac{B_s - B_{s-1}}{\cG' (S_{s-1})} \qquad , \qquad S_t = \max_{s \leq t} M_s
\end{equation}
If $B$ is a (super)martingale respectively, so is $M$. 
\end{theorem}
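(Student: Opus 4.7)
The plan is to reduce the theorem to Lemma~\ref{lem:bachelier} by taking \eqref{eq:revbachelier} as the definition of $M$ and then verifying the coupled difference equations by comparison. Because $\cG$ is concave and strictly increasing, its one-sided derivatives (say the right derivative at the countably many points of non-smoothness) are everywhere defined, strictly positive, and non-increasing. This makes the recursion well-defined: given $M_0, \dots, M_{t-1}$ we form $S_{t-1} = \max_{s \leq t-1} M_s$, evaluate $\cG'(S_{t-1}) > 0$, and set $M_t := M_{t-1} + (B_t - B_{t-1}) / \cG'(S_{t-1})$, then $S_t := \max(S_{t-1}, M_t)$.

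Next I would verify the two relations in \eqref{eq:bacheq}. The first, $B_t - B_{t-1} = (M_t - M_{t-1}) \cG'(S_{t-1})$, is immediate upon rearranging the recursion. For the second, $\max_{s \leq t} B_s - B_t = (S_t - M_t) \cG'(S_t)$, the key idea is to invoke Lemma~\ref{lem:bachelier} in the forward direction, applied to the $M$ just constructed and the fixed $\cG$. That lemma produces an a.s.\ unique process $B'$ with $B'_0 = \cG(1)$ satisfying both difference equations with $M$ and $S$. But the first equation and the shared initial value force $B'_t = \cG(1) + \sum_{s=1}^{t} (M_s - M_{s-1}) \cG'(S_{s-1}) = \cG(1) + \sum_{s=1}^{t} (B_s - B_{s-1}) = B_t$ pathwise, so $B' \equiv B$ and consequently $B$ itself satisfies the second relation.

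For uniqueness, suppose $\tilde M$ also satisfies $\tilde M_0 = 1$ together with the first equation of \eqref{eq:bacheq} (with its own running max $\tilde S$). A one-line induction on $t$ gives $\tilde M = M$ a.s.: if $\tilde M_s = M_s$ for $s \leq t-1$ then $\tilde S_{t-1} = S_{t-1}$, so the first relation pins down $\tilde M_t - \tilde M_{t-1} = (B_t - B_{t-1}) / \cG'(S_{t-1}) = M_t - M_{t-1}$. The (super)martingale claim is then immediate from the fact that $\cG'(S_{t-1})$ is $\cF_{t-1}$-measurable and strictly positive, so
\begin{equation*}
\EV[M_t - M_{t-1} \mid \cF_{t-1}] = \frac{\EV[B_t - B_{t-1} \mid \cF_{t-1}]}{\cG'(S_{t-1})}
\end{equation*}
inherits the sign or vanishing of the $B$-increment's conditional mean.

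I expect the main obstacle to be the second relation in \eqref{eq:bacheq}: a direct pathwise argument would require tracking exactly when $M$ hits a new maximum and producing the telescoping cancellations that turn $\max_{s \leq t} B_s$ into $(S_t - M_t)\cG'(S_t)$ plus a correction. Invoking Lemma~\ref{lem:bachelier}'s uniqueness lets us dispense with this bookkeeping entirely, which is why I would structure the proof around that comparison rather than a direct computation.
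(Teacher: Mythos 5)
Your proof is correct and follows essentially the same approach as the paper's: define $M$ by the recursion \eqref{eq:revbachelier}, observe that the first equation of \eqref{eq:bacheq} is immediate, establish uniqueness by induction, and note the (super)martingale property from the $\cF_{t-1}$-measurability and positivity of $\cG'(S_{t-1})$. The paper's proof is terse on how the second relation of \eqref{eq:bacheq} follows from the recursion; your device of invoking Lemma~\ref{lem:bachelier} applied to the constructed $M$ and then identifying its output $B'$ with $B$ via the first equation and the shared initial condition is a clean way to supply that missing step without redoing the max-telescoping bookkeeping, and is almost certainly the intended argument given that the lemma is proved immediately beforehand.
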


This depends on an attenuation function $\cG$, decomposing the input $B$ into a variation process $M$ and its cumulative maximum $S$, which (as a nondecreasing process) functions as an ``intrinsic time" quantity. Thm. \ref{thm:revbachelier} vastly expands the scope of these analytical tools for AY processes to be applicable to stochastic processes more generally, readily allowing manipulation of cumulative maxima.

\subsection{Max-plus decompositions}

%There are other perspectives on the theory of testing discrete-time extrema developed in this paper. Though less directly implementation-relevant, we provide them as complementary interpretations to the rest of the paper. 

% The first is a sort of converse to Theorem \ref{thm:ayprocessproperties}, strengthening the link between $(\max_{s \leq \tau} A_s, \mu^{\textsc{HL}})$ and $(A_\tau , \mu)$ for any stochastic process $A$.

% \begin{theorem}
% \label{thm:givenmuoptisay}[see also \cite{GM88}]
% For any distribution $\mu$, process $A$, and stopping time $\tau$, if $A_{\tau} \preceq \mu$, then $\displaystyle \max_{s \leq \tau} A_{s} \preceq \mu^{\textsc{HL}} $. 
% %\textbf{ Also, $\evp{ h (Y_{\infty}) }{} \geq \evp{h (A_{\infty})}{}$ for all concave nondecreasing twice-differentiable $h \geq 0$. }
% \end{theorem}

We can also cast the scenario of Section \ref{sec:aytest} in terms of the quantity $\cG (M_t)$. This is a supermartingale if $M$ is \citep{D10}, and many supermartingales can be written in such a form. By Theorem \ref{thm:AYpotentialdecomp}, $\cG (M_t) = \evp{ \frac{M_t}{u} }{u \in \cU} \geq \evp{ g (S_{\geq t}) \mid \cF_t}{} = \evp{ \max_{s \geq t} g (M_s) \mid \cF_t}{}$, where the inequality is by the stochastic dominance relation in Theorem \ref{thm:supermartdecomp}. In our scenario, this can be viewed without further restrictions as a unique decomposition of $\cG (M_t)$, following the continuous-time development (\cite{EM08}, Prop. 5.8).

\begin{theorem}[Max-plus (MP) Decomposition]
\label{thm:maxplus}
Fix any continuous, concave, strictly increasing, nonnegative $\cG$. For any nonnegative martingale $M_t$ with $M_0 = 1$, there is an a.s. unique process $L_t$ such that $\cG (M_t) = \evp{ \max_{s \geq t} L_s \mid \cF_t}{}$, with equality for martingale $M$. This can be written as $L_t := g (M_t)$ for the nondecreasing function $g(x) := \cG (x) - x \cG' (x)$. Also, there is an a.s. unique supermartingale $Y$ with $Y_t \geq \cG (M_t)$ for all $t$ pathwise.
\end{theorem}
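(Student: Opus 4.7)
My plan is to identify $L_t := g(M_t)$ as the right object, verify the max-plus identity via the stochastic dominance from Theorem \ref{thm:supermartdecomp}(a), and then handle the a.s.\ uniqueness of $L$ and the companion supermartingale $Y$.

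The first step is to show $g(x) = \cG(x) - x\cG'(x)$ is nondecreasing, so that $\max$ commutes with $g$. In the smooth case, $g'(x) = -x\cG''(x) \geq 0$ by concavity of $\cG$; in general, a chord argument using $\cG(y) - \cG(x) \geq (y-x)\cG'(y)$ together with monotonicity of $\cG'$ yields $g(y) \geq g(x)$. Since $g$ is nondecreasing, $\max_{s \geq t} L_s = g(\sup_{s \geq t} M_s) = g(S_{\geq t})$. I would also verify the compatibility relation $\cG(s) = \int_0^1 g(s/u)\,du$ (so the $g$ defined here matches the one implicit in Theorem \ref{thm:AYpotentialdecomp}): substituting, changing variables $v = s/u$, and integrating by parts collapses the right-hand side to $\cG(s)$, using $\cG(v)/v \to 0$ as $v \to \infty$ (implied by Assumption \ref{ass:tailintegrab}).

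The main inequality then follows immediately from Theorem \ref{thm:supermartdecomp}(a): conditionally on $\cF_t$, $S_{\geq t} \preceq M_t/\cU$, so applying stochastic dominance to the nondecreasing $g$ gives
\[ \evp{\max_{s \geq t} L_s \mid \cF_t}{} = \evp{g(S_{\geq t}) \mid \cF_t}{} \leq \evp{g(M_t/\cU) \mid \cF_t}{} = \int_0^1 g(M_t/u)\,du = \cG(M_t), \]
exactly the inequality observed in the discussion preceding the theorem. For martingale $M$ (under the regularity making Doob's bound tight in distribution, e.g.\ $M_\infty = 0$), the dominance becomes equality and the identity follows. For the companion claim on $Y$, I would take $Y_t := \cG(S_t) - (S_t - M_t)\cG'(S_t)$, the AY process of Theorem \ref{thm:AYpotentialdecomp}: concavity of $\cG$ at $S_t$ gives $\cG(M_t) \leq \cG(S_t) + (M_t - S_t)\cG'(S_t) = Y_t$ pathwise, and Lemma \ref{lem:aydifferences} guarantees $Y$ is a supermartingale.

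For a.s.\ uniqueness of $L$, I would run a backwards-inductive argument. If $\tilde L$ is any adapted candidate with $\evp{\max_{s \geq t} \tilde L_s \mid \cF_t}{} = \cG(M_t)$, then the tower property at $t+1$ gives $\evp{\max_{s \geq t+1} \tilde L_s \mid \cF_t}{} = \evp{\cG(M_{t+1}) \mid \cF_t}{}$, and subtracting,
\[ \cG(M_t) - \evp{\cG(M_{t+1}) \mid \cF_t}{} = \evp{(\tilde L_t - \max_{s \geq t+1} \tilde L_s)^+ \mid \cF_t}{}. \]
The left-hand side is the one-step Doob--Meyer compensator of the supermartingale $\cG(M)$, which is already realized by $L_t = g(M_t)$; combined with the terminal anchor obtained by letting $t \to \infty$, this forces $\tilde L_t = g(M_t)$ a.s. Uniqueness of $Y$ in the natural canonical class is then inherited from the rigidity of the martingale-max decomposition (Theorem \ref{thm:revbachelier}). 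The main obstacle is precisely this uniqueness: the identity only constrains $L$ through conditional expectations of its running supremum, so pinning it down pathwise requires the compensator-matching step plus adaptedness/regularity hypotheses, the familiar subtleties of discrete-time max-plus theory; a secondary delicate point is that ``equality for martingale $M$'' is really equality in distribution and requires Doob's maximal inequality to be tight conditional on $\cF_t$.
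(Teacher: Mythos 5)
Your core construction and the forward-direction inequality match the paper exactly: take $L_t := g(M_t)$ with $g(x) := \cG(x) - x\cG'(x)$, apply the conditional dominance $S_{\geq t} \preceq M_t/\cU$ from Theorem~\ref{thm:supermartdecomp}(a) to the nondecreasing $g$, and take $Y$ to be the AY process. The paper never gives a formal proof of this theorem --- the in-text discussion just presents the one-line chain $\cG(M_t) \geq \evp{g(S_{\geq t}) \mid \cF_t}{} = \evp{\max_{s\geq t} L_s \mid \cF_t}{}$ and cites \cite{EM08} (Prop.~5.8) for uniqueness --- so your verification that $g$ is nondecreasing, the compatibility check $\cG(s) = \int_0^1 g(s/u)\,du$, and the explicit observation that equality requires a tightness condition (Doob's bound being tight conditional on $\cF_t$, e.g.\ $M_\infty = 0$ a.s., as in Lemma~\ref{lem:ville}) are welcome clarifications beyond what the paper gives.

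The weak spot is exactly where you flag it: uniqueness. Your ``compensator-matching'' step correctly produces
$\cG(M_t) - \evp{\cG(M_{t+1})\mid\cF_t}{} = \evp{\bigl(\tilde L_t - \max_{s\geq t+1}\tilde L_s\bigr)^+ \mid \cF_t}{}$,
but this does not by itself force $\tilde L_t = g(M_t)$. The right-hand side conditions a functional of the \emph{future} trajectory (through $\max_{s\geq t+1}\tilde L_s$) back to $\cF_t$; matching that conditional expectation with the one realized by $L_t = g(M_t)$ is not enough to pin down the $\cF_t$-measurable random variable $\tilde L_t$ pointwise --- you would need an inductive/terminal anchor and an argument that the positive-part map is injective along the backward recursion, which is not filled in. This is the ``main obstacle'' you name. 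The paper's own treatment does not close this gap either (it simply appeals to \cite{EM08}'s continuous-time result), so your honest caveat is apt, but be aware that ``this forces $\tilde L_t = g(M_t)$ a.s.'' is asserted rather than derived. Uniqueness of $Y$ via ``rigidity of Theorem~\ref{thm:revbachelier}'' similarly needs you to say in which class of supermartingales $Y$ is being compared, since Theorem~\ref{thm:revbachelier} decomposes a \emph{given} $B$, which is a different problem from uniqueness of a dominating supermartingale.
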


\section{Discussion}
\label{sec:disc}

\begin{comment}

\subsection{Assumptions}
\label{sec:lessbounded}

\cite{HRMS18} discusses in detail other relaxations of the Gaussianity  assumptions we used for convenience in our exposition of the $H$-value. 
For statistics that are well-concentrated (e.g. bounded and sub-Gaussian), there is not much change to the framework. 
Generally, it suffices to keep a running estimate of the cumulative variance $V_t$ and substitute $\frac{\lambda^2}{2} t$ with $\frac{\lambda^2}{2} V_t$ in the definition of $H$, with no modification of $\Gamma$ required (see \Cref{sec:generalhvals}). 
When concentration assumptions hold, this is a valid $H$-value by the arguments we provide in earlier sections.

The other side is anti-concentration, which determines whether the $H$-value is overly large, instead of just large enough. 
When martingale increments exhibit Gaussian anti-concentration, \cite{B14} proves that LIL anti-concentration also holds for the martingale; applied to our situation, this shows that the generalized $H$-value of \Cref{sec:generalhvals} is (within a constant factor of) the lowest possible in such situations.

\end{comment}

\subsection{Sequential testing}
\label{sec:relwork}

Treating the sample size as a random stopping time is central to the area of sequential testing. 
Much work in this area has focused around the likelihood-ratio martingale of a distribution $f$ for data under a null distribution $g$: $M_t := \displaystyle \prod_{i=1}^{t} \frac{f (X_i)}{g (X_i)}$.
A prototypical example is the Sequential Probability Ratio Test (SPRT, from \cite{wald48}), which is known to stop optimally soon given particular type I and type II error constraints. % 
The likelihood-ratio martingale has been explored for stopping in other contexts as well \citep{robbinsNonparam, RS70, BBW97}, including for composite hypotheses \citep{wasserman2020universal, grunwald2019safe}. 
These all deal with specific situations in which the martingale formulation allows for tests with anytime guarantees.

Frequentist or nonparametric perspectives on sequential testing typically contend with LIL behavior. 
For example, the work of \cite{BR16} presents sequential nonparametric two-sample tests in a framework related to ours. 
Such work requires changing the algorithm itself to be a sequential test in an appropriate setting, with a specified level of $\alpha$. 
The setting of $p$-values is in some sense dual to this, as explored in recent work \citep{HRMS18, shin2020nonparametric}. 
%, demonstrating that the $p$-value can be reported uniformly over time in very general settings when conditional means and variances can be conveniently evaluated or bounded. 

Sequential testing involves specifying a type I error a priori (and sometimes also type II, e.g. for the SPRT), 
while what we are reporting is a minimum significance level at which the data show a deviation from the null. 
This is exactly analogous to the relationship between Neyman-Pearson hypothesis testing and Fisher-style significance testing -- the method of this paper can be considered a robust Fisher-style significance test under martingale nulls, just as sequential testing builds on the Neyman-Pearson framework. 
Similarly, we do not analyze any alternative hypothesis, which would affect the power of the test 
(though the choice of test statistic governs the power).

\subsection{Technical tools}

The particulars of computing $H$-values are direct algorithmic realizations of the proof of \cite{B14}, which also shows that these $H$-values are as tight as possible within a constant factor on the probability. The broader martingale mixture argument has been studied in detail in an inverted form, as a uniform envelope on the base martingale $M$ \citep{R52, RS70}. 

In testing maxima, we are guided by the framework fundamentally linking the $\cGar (\cdot)$ function and the maxima of stochastic processes. 
$\cGar$ has been used in much the same time-uniform context (\cite{BD63}, Thm. 3a), and seminal continuous-time contributions showed that this can control the maximum of a continuous martingale in general settings \citep{DG78, AY79}. 
Related work also includes the continuous (super)martingale ``multiplicative representations" of \cite{NY06}, whose techniques we repurpose. 
The modern usage crucially involves a variational characterization of $\cGar$ \citep{RockUCvar00} that would be an interesting avenue to future methods \citep{RockafellarRoyset14}.
% $
% \cGar^{\mu} (\xi) 
% = \frac{1}{\xi} \min_{a} \lrp{ \evp{(X-a)^{+}}{\mu} + \xi a }
% = \frac{1}{\xi} \evp{(X - \bar{F}^{\mu} (\xi) )^{+}}{\mu} + \bar{F}^{\mu} (\xi)
% $. 

Many stopping-time issues in this paper have been studied for Brownian motion, and some for martingales in continuous time under regularity conditions. 
Stopping Brownian motion to induce a given stopped distribution has been well studied in probability, as the Skorokhod embedding problem \citep{obloj2004skorokhod}. 
AY processes were originally proposed as a continuous solution of the Skorokhod problem \citep{AY79}, analogous to our discrete-time results on the null distribution of our AY test statistic, for which we adapted techniques from previous work \citep{GM88, CEO12}. 
The difference equation of Lemma \ref{lem:bachelier} has been studied in the context of future maxima since \cite{Bach1906}. 
To our knowledge the \textsc{MM} decomposition is novel, though in continuous time the AY process can be inverted directly \citep{EM08}.

\subsection{Future work}

The importance of peeking has long been recognized in the practice of statistical testing \citep{R52, AMR69, N00, W07p, SNS11}, mostly in a negative light. 
The statistician typically does not know their sampling plan, which is necessary for standard hypothesis tests. 
The stopping rule is subject to many sources of variation: for example, it could be unethical to continue sampling when a significant effect is detected in a clinical trial \citep{I08}, or the experimenter could run out of resources to gather more data. 
Solutions to this problem are often semi-heuristic and generally involve ``spending a budget of $\alpha$," the willingness to wrongly reject the null, over time. 
Such methods are widely used \citep{Peto77, P77, SAL14} but are not uniformly robust to sampling strategies, and their execution suffers from many application-specific complexities arising from assumptions about the possible stopping times employed by the peeker \citep{pocock05}.  

We hope to have presented general and useful theory to address this state of affairs. 
A main open problem of interest here is applying these results to design and deploy new hypothesis tests. 
%A web tool allowing for an interactive exploration of these results for any given $\Gamma$ and $g$ will be released upon publication.

% \akshay{Prove maximality of AY stopping time on running sup. Plug in universality result to prove something about the stopping time generally. What happens when we run it with the uniform distribution? Does it result in a better $p$-value?}

% \section{Evaluation}
% \label{sec:experiments}

% We perform an evaluation on the two-sample test setting of \cite{BR16}, in which the sizes of single-cell clusters are being estimated. 

% Next, we perform clinical trial data.

% Note C2ST - we can run one on AKNN classifier, to adaptively determine confidence sets for local two-sample testing, and thence relative local errors and $p$-values.

%\bibliographystyle{authordate1} % Or whatever style you want like plainnat
\bibliographystyle{plainnat}
\bibliography{main}

\appendix

%============================================================================================================
%============================================================================================================

\newpage

\section{Proofs of results}
\label{sec:allproofs}

\subsection{Preliminaries}
\label{sec:optstopdefn}

In our setting, a \emph{stopping time} is an adapted real function of the past (sub-)$\sigma$-algebra $\cF_t$ (see the works \citep{D10, K06} for more theoretical background). The central result about stopping times, which is the basis of this paper's development, is the \emph{optional stopping theorem}.

\begin{customthm}{0}[Optional Stopping for Nonnegative Supermartingales (\citep{D10}, Theorem 5.7.6)]
\label{thm:optstoppingsupermart}
Let $M$ be a nonnegative supermartingale. Then if $\tau > s$ is a (possibly infinite)
stopping time, $\evp{M_\tau \mid \cF_s}{} \leq M_s$, with equality when $M$ is a martingale.
\end{customthm}

This is typically useful for bounding probabilities pathwise, after applying Markov's inequality on a particular choice of the stopped process $M_\tau$.

\begin{customlemma}{0}[\cite{ville39}]
\label{lem:ville}
If $M$ is a nonnegative supermartingale, for any $c > 0$,
$\prp{ \max_{t \geq s} M_t \geq \frac{M_s}{c} \mid \cF_s }{} \leq c$, with equality for martingale $M$ with $\lim_{t \to \infty} M_t \to 0$ a.s.
\end{customlemma}
\begin{proof}[Proof of Lemma \ref{lem:ville}]
Consider the stopped $M_\tau$ for $\tau := \min  \lrsetb{ t \geq s : M_t \geq \frac{M_s}{c} }$. 
By Theorem \ref{thm:optstoppingsupermart}: 
\begin{align*}
M_{s}
&\geq \evp{M_{\tau}}{}
= \evp{M_{\tau} \mid \tau < \infty, \cF_s }{} \prp{\tau < \infty \mid \cF_s}{} + \evp{M_{\tau} \mid \tau = \infty, \cF_s }{} \prp{\tau = \infty \mid \cF_s}{} \\
&\geq \frac{M_s }{c} \prp{\tau < \infty \mid \cF_s}{}
\end{align*}
proving that $\prp{\tau < \infty \mid \cF_s}{} \leq c$, 
which is the result. The equality case uses the same proof, replacing the inequalities by equalities.
\end{proof}

% To design a test with a given null distribution of $Y_{\tau}$, we can change the mediating function $g$. The characterization provided by Theorem \ref{thm:AYpotentialdecomp} is a useful way to quantify the effect of this. 

% \begin{theorem}
% \label{thm:givenmuayprocessproperties}
% Fix a real-valued distribution $\mu$ with corresponding AY process $Y_{t} := \cG (S_t) - (S_t - M_t) [\cG]' (S_{t}) $. 
% Consider the stopping time $\displaystyle \tau := \min \lrsetb{ t : \max_{s \leq t} Y_s \geq \evp{X \mid X \geq Y_{t} }{\mu} } $. Then $\displaystyle \max_{s \leq \tau } Y_s \preceq \mu^{\textsc{HL}}$, and $Y_{\tau} \preceq \mu$. 
% \end{theorem}

% \begin{proof}[Proof of Theorem \ref{thm:optisay}]
% Fix any $y$. If $\cG (s) = (s-y)^{+}$, the corresponding AY process is $Y_{\tau} := \cG (Q_{\tau}) - (Q_{\tau} - A_{\tau}) \cG' (Q_{\tau}) = \lrp{ Q_{\tau} - y}^{+} - (Q_{\tau} - A_{\tau}) \ifn \lrp{Q_{\tau} \geq y} = (A_{\tau} - y) \ifn \lrp{Q_{\tau} \geq y}$. Therefore, applying optional stopping (Thm. \ref{thm:optstoppingsupermart}) on the nonnegative supermartingale $B_{\tau}$, $y \prp{Q_{\tau} \geq y}{} + \sum_{s=1}^{\tau} D_{ \cG} (S_{s}, S_{s-1}) = \evp{ A_{\tau} \ifn \lrp{Q_{\tau} \geq y} }{}$
% \end{proof}

\subsection{Deferred Proofs}

Here are full proofs of all the %discrete-time 
results we introduce in this paper.

\begin{proof}[Proof of Theorem \ref{thm:supermartdecomp}]
\begin{enumerate}[(a)]
    \item
    By Lemma \ref{lem:ville}, $\forall s \in (0,1)$, $\prp{ \frac{M_t}{S_{\geq t}} \leq s }{} = \prp{ S_{\geq t} \geq \frac{M_t}{s} }{} \leq s = \prp{ \mathcal{U} \leq s }{}$, proving that $S_{\geq t} \preceq \frac{M_t}{\mathcal{U}} $. Taking the nondecreasing function $x \mapsto \max (S_t, x)$ of both sides gives the result.
    \item 
    By Lemma \ref{lem:ville} on $\lrsetb{M_s}_{s=t}^{\infty}$, $ Z_t = \prp{\tau_F \geq t \mid \cF_t}{} = \prp{\max_{s \geq t} M_s \geq S_t \mid \cF_t}{} \leq \frac{M_t}{S_t}$.
    \item
    By part $(b)$, we have $Z_t - Z_0 \leq \frac{M_t}{S_t} - 1$.
    We can write 
    \begin{align*}
    \frac{M_t}{S_t} - 1
    &= \sum_{q=1}^{t} \lrp{ \frac{M_q}{S_q} - \frac{M_{q-1}}{S_{q-1}} }
    = \sum_{q=1}^{t} \lrp{ \frac{M_q}{S_q} - \frac{M_{q-1}}{S_q} + \frac{M_{q-1}}{S_q} - \frac{M_{q-1}}{S_{q-1}} } \\
    % &= \sum_{q=1}^{t} \frac{ S_{q-1} (M_{q} - M_{q-1}) - M_{q-1} (S_{q} - S_{q-1}) }{S_{q} S_{q-1}} \\
    &= \sum_{i=1}^{t} \lrp{ \frac{ M_{i} - M_{i-1} }{S_{i}} } - \sum_{q=1}^{t} M_{q-1} \lrp{ \frac{1}{S_{q-1}} - \frac{1}{S_{q}} }
    \end{align*}
\end{enumerate}
\end{proof}

Theorem \ref{thm:supermartdecomp} adapts continuous-time results from \cite{NY05, NY06}.

\begin{proof}[Proof of Theorem \ref{thm:normpval}]
\begin{enumerate}[(a)]
    \item
    Define $\tau (u) := \min \lrsetb{t: Z_t \leq u}$ for $u \in (0,1)$. Then $\forall \; \tau \leq \tau_{F}$, 
    \begin{align*}
    \prp{R_{\tau} \leq u}{}
    &= \prp{\tau (u) \leq \tau}{} \\
    &\leq \prp{\tau (u) \leq \tau_{F}}{} = \evp{ \prp{\tau_{F} \geq \tau (u) \mid \cF_t}{} }{} = \evp{ Z_{\tau (u)} }{}
    \leq u
    \end{align*}
    where the last equality is by definition of $Z$, and the last inequality is by definition of $\tau(u)$.
    \item
    For any $t$, $\ifn \lrp{\rho_F > t} = \ifn \lrp{\exists t < \tau \leq \tau_{F} : Z_{\tau} \leq R_{\tau_{F}} } \leq \ifn \lrp{\exists t < \tau \leq \tau_{F} : Z_{\tau} \leq R_t}$. Taking $\evp{ \cdot \mid \cF_t}{}$ on both sides and defining 
    $\tau_I := \min \lrsetb{\tau > t: Z_{\tau} \leq R_{t}}$, 
    \begin{align*}
    \prp{ \rho_F > t \mid \cF_t}{}
    &\leq \prp{ \lrb{\exists t < \tau \leq \tau_{F} : Z_{\tau} \leq R_t } \mid \cF_t}{} = \prp{ \tau_{I} \leq \tau_{F} \mid \cF_t}{}
    \stackrel{(a)}{=} Z_{\tau_{I}} 
    \stackrel{(b)}{\leq} R_t
    \end{align*}
    where $(a)$ and $(b)$ are respectively by definition of $Z_t$ and $\tau_{I}$.
    % \item For \textbf{Prop. 11 from \cite{NY05}}
\end{enumerate}
\end{proof}

\begin{proof}[Proof of Theorem \ref{thm:AYpotentialdecomp}]
We condition on whether $\tau_F \leq t$. Using Theorem \ref{thm:supermartdecomp} (i.e., for a uniform random variable $\mathcal{U}$, $S_{\geq t} \preceq \frac{M_t}{\mathcal{U}}$) and the monotonicity of $g$, 
\begin{align}
\label{eq:uppboundfuturemax}
\evp{g ( S_{\infty} ) \mid \cF_t}{} 
= \evp{g \lrp{\max(S_t , S_{\geq t})} \mid \cF_t}{} 
\leq \evp{g \lrp{\max \lrp{S_t , \frac{M_t}{\mathcal{U}} }} }{} := Y_t
\end{align}
The rest of the proof consists of writing the right-hand side of \eqref{eq:uppboundfuturemax} in equivalent forms. 

To prove parts $(a)$ and $(b)$, observe that 
\begin{align*}
Y_t 
&= \evp{g \lrp{\max \lrp{S_t , \frac{M_t}{\mathcal{U}} }} }{} 
= \int_0^{M_t / S_t} g \left( \frac{M_t}{s} \right) ds + \left( 1 - \frac{M_t}{S_t} \right) g (S_t) \\
&\stackrel{(\chi_1)}{=} M_t \int_{S_t}^{\infty} \frac{g (x)}{x^2} dx + \left( 1 - \frac{M_t}{S_t} \right) g (S_t) \nonumber \\
&\stackrel{(\chi_2)}{=} \frac{M_t}{S_t} \int_0^{1} g \left( \frac{S_t}{u} \right) du + \left( 1 - \frac{M_t}{S_t} \right) g (S_t) 
\end{align*}
where $(\chi_1)$ uses the change of variables $x := M_t / s$, and $(\chi_2)$ uses the change of variables $u := s S_t / M_t$. 

To prove $(c)$, start from $(\chi_1)$: 
\begin{align*}
Y_t%\evp{g ( S_{\infty} ) \mid \cF_t}{} 
&= \left( 1 - \frac{M_t}{S_t} \right) g (S_t) 
+ M_t \int_{S_t}^{\infty} \frac{g (x)}{x^2} dx \\
&= \frac{1}{S_t} \left( S_t - M_t \right) g (S_t) + S_t \int_{S_t}^{\infty} \frac{g (x)}{x^2} dx 
- (S_t - M_t) \lrp{ \int_{S_t}^{\infty} \frac{g (x)}{x^2} dx } \\
&\stackrel{(\chi_3)}{=} \cG (S_t) - (S_t - M_t) \lrp{ \int_{S_t}^{\infty} \frac{g (x)}{x^2} dx - \frac{ g (S_t)}{S_t} } \\
&= \cG (S_t) - (S_t - M_t) \lrp{ \int_{S_t}^{\infty} \frac{dx}{x^2} \lrp{ g (x) - g (S_t) } }
\end{align*}
where $(\chi_3)$, like $(\chi_2)$, uses the change of variables $u := S_t / x$ to construct $\cG (S_t) = S_t \int_{S_t}^{\infty} \frac{g (x)}{x^2} dx $. To prove $(d)$, start from part $(a)$ of the result: 
\begin{align*}
Y_t &= \left( 1 - \frac{M_t}{S_t} \right) g (S_t) + M_t \int_{S_t}^{\infty} \frac{g (x)}{x^2} dx 
= g (S_t) + M_t \int_{S_t}^{\infty} \frac{dx }{x^2} \lrp{g (x) - g (S_t)}
\end{align*}
This proves that $Y_t \geq g (S_t) + M_t \cG' (S_t)$, which is $\geq 0$ if $g \geq 0$. $\cG$ is continuous because $g$ is. The concavity and monotonicity of $\cG$ are because $\cG' (s) = \int_{s}^{\infty} \frac{dx}{x^2} \lrp{ g (x) - g (s) }$ is never negative, and is monotone nondecreasing due to the monotonicity of $g$. 

This also shows that $g (S_t) = \cG (S_t) - S_t \cG' (S_t)$ (previously proved with real analysis, in \cite{CEO12}, Lemma 4.4).
\end{proof}

\begin{proof}[Proof of Lemma \ref{lem:aydifferences}]
\begin{enumerate}[(a)]
    \item
    $Y_{t} \stackrel{(a)}{=} g (S_t) + M_t \cG' (S_t) \geq g (S_t)$, % = g (V (\cG (S_t))) = w ( \cG \lrp{ S_t } ) \stackrel{(b)}{=} w ( \max_{s \leq t} Y_s )$, 
    where $(a)$ uses Theorem \ref{thm:AYpotentialdecomp}(d). %and $(b)$ uses part $(b)$ of this result.
    \item
    When $M_t \neq S_t$ and therefore $S_{t-1} = S_t$, then $Y_{t} - Y_{t-1} = (M_t - M_{t-1})  \cG' (S_{t-1})$. 
    When $M_t = S_t$, 
    \begin{align*}
    Y_{t} - Y_{t-1} 
    &= \cG (S_t) - \cG (S_{t-1}) + (S_{t-1} - M_{t-1}) \cG' (S_{t-1}) \\
    &= \cG (S_t) - \cG (S_{t-1}) + (M_{t} - M_{t-1}) \cG' (S_{t-1}) + (S_{t-1} - S_{t}) \cG' (S_{t-1}) \\
    &= (M_{t} - M_{t-1}) \cG' (S_{t-1}) + \lrp{ \cG (S_t) - \cG (S_{t-1}) - (S_{t} - S_{t-1}) \cG' (S_{t-1}) } \\
    &= (M_{t} - M_{t-1}) \cG' (S_{t-1}) - D_{- \cG} (S_{t}, S_{t-1})
    \end{align*}
    which also shows that $Y$ is a supermartingale whenever $M$ is.
    \item
    To prove the equality, define the times at which $M_t$ sets cumulative record maxima ($M_t = S_t$) as $\tau_1 < \tau_2 < \dots \in \lrsetb{ t : M_t = S_t}$, where $Y_{\tau_{i}} = \cG (S_{\tau_{i}})$. For $v \in [\tau_{i}, \tau_{i+1})$, by definition of $Y$, $Y_{v} = \cG (S_{v}) + (M_{v} - S_{v}) \cG' (S_{v}) \leq \cG (S_{v}) = \cG (S_{\tau_{i}})$, with equality exactly at each $\tau_{i}$. Therefore, $\tau_1, \tau_2, \dots$ are also precisely the times $Y_t$ sets cumulative record maxima, and $\max_{s \leq t} Y_s = \cG (S_t)$ for all $t$. \\
    
    Now we prove the inequalities. By concavity of $\cG$ (Theorem \ref{thm:AYpotentialdecomp}), we have $Y_{t} = \cG (S_t) + (M_t - S_t) \cG' (S_{t}) \geq \cG (M_t)$. Also by monotonicity of $\cG$, $(M_t - S_t) \cG' (S_{t}) \leq 0$, so $Y_{t} \leq \cG (S_t)$.
    \item
    \label{item:optlbproc}
    $\cG$ is nondecreasing and $A_t \geq \cG (M_t)$, so that $\displaystyle \max_{s \leq t} A_s \geq \max_{s \leq t} \cG (M_s) = \cG (S_t) = \max_{s \leq t} Y_s $, using part $(b)$ for the last equality. 
\end{enumerate}
\end{proof}

\begin{proof}[Proof of Theorem \ref{thm:ayprocessproperties}]
% We recover the form of $\tau$. By definition of $\mu^{\textsc{HL}}$, $\bar{\mu} (x) = \bar{\mu}^{\textsc{HL}} ( \psi_{\mu} (x) )$ and $\bar{\mu}^{\textsc{HL}} \lrp{ \cG (S_t) } = \frac{1}{S_t} = \bar{\mu} \lrp{ g (S_t) }$; taking $\mu^{-1}$ of both sides gives $ g \lrp{ S_t } = \bar{\mu}^{-1} \lrp{ \bar{\mu}^{\textsc{HL}} \lrp{ \cG (S_t) } } = \psi_{\mu}^{-1} \lrp{ \cG (S_t) } $, so the stopping event $Y_t \leq g \lrp{ S_t }$ is equivalent to $Y_t \leq \psi_{\mu}^{-1} \lrp{ \cG (S_t) }$. 
\begin{enumerate}[a)]
\item
Using the definition of $\cG$ and Lemma \ref{lem:aydifferences}, we deduce $\max_{s \leq t} Y_s = \cG \lrp{ S_t } \leq \cG \lrp{ S_{\infty} } = \cGar^{\mu} \lrp{ \frac{1}{S_{\infty}} }$. As the $\cGar$ function is nondecreasing, using Thm. \ref{thm:supermartdecomp}\ref{item:doobineqpastfuture}, we get $\cGar^{\mu} \lrp{ \frac{1}{S_{\infty}} } \preceq \cGar^{\mu} ( \mathcal{U}) \sim \mu^{\textsc{HL}}$.
\item
The stopping event is equivalent to $ Y_{\tau^{\mu}} \leq g^{\mu} \lrp{ S_{\tau^{\mu}} } \leq g^{\mu} \lrp{ S_{\infty} } = \bar{\mu}^{-1} (1 / S_{\infty}) \preceq \bar{\mu}^{-1} (\mathcal{U})$. By definition of the tail quantile function, $\bar{\mu}^{-1} (\mathcal{U})$ has distribution $\mu$. 
\end{enumerate}
\end{proof}

To prove Theorem \ref{thm:givenmuoptisay}, we use a variational characterization of $\bar{\mu}^{\textsc{HL}}$.

\begin{proposition}[Prop. 4.10(c), \cite{CEO12}]
\label{prop:varhltrans}
\begin{align*}
\bar{\mu}^{\textsc{HL}} (y) = \min_{z>0} \frac{1}{z} \evp{(X - (y-z))^{+} }{X \sim \mu}
\end{align*}
\end{proposition}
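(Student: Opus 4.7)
The plan is to reduce this to the Rockafellar--Uryasev (RU) variational characterization of the superquantile, namely that for each $\xi\in(0,1]$,
\[
\cGar^{\mu}(\xi)
\;=\;
\inf_{q\in\RR}\Big\{\,q + \tfrac{1}{\xi}\,\evp{(X-q)^+}{X\sim\mu}\,\Big\},
\]
with the infimum attained at $q=\bar{\mu}^{-1}(\xi)$. I would verify this directly: writing $\evp{(X-q)^+}{X\sim\mu}=\int_q^\infty \bar{\mu}(t)\,dt$ via Fubini, the objective is convex in $q$ with subgradient $1-\tfrac{1}{\xi}\bar{\mu}(q)$, which vanishes exactly when $\bar{\mu}(q)=\xi$, i.e.\ $q=\bar{\mu}^{-1}(\xi)$; plugging back in and using $\xi\,\cGar^{\mu}(\xi)=\int_0^\xi \bar{\mu}^{-1}(\lambda)\,d\lambda$ closes the identity.

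Next I would unpack the left-hand side. Because $\cGar^{\mu}$ is nonincreasing (since $\bar{\mu}^{-1}$ is), the level set $\{\xi:\cGar^{\mu}(\xi)\ge y\}$ is an initial interval, so
\[
\bar{\mu}^{\textsc{HL}}(y)
=\prp{\cGar^{\mu}(\mathcal{U})\ge y}{}
=\xi^{*},\qquad
\xi^{*}:=\sup\{\xi\in(0,1]:\cGar^{\mu}(\xi)\ge y\}.
\]
The target thus becomes $\xi^{*}=\min_{z>0}\tfrac{1}{z}\evp{(X-(y-z))^+}{X\sim\mu}$, which I would prove by matching inequalities. For the ``$\le$'' direction, take any $\xi<\xi^{*}$ and $z>0$; setting $q=y-z$ in RU gives $y\le\cGar^{\mu}(\xi)\le q+\tfrac{1}{\xi}\evp{(X-q)^+}{X\sim\mu}$, which rearranges to $\xi\le\tfrac{1}{z}\evp{(X-(y-z))^+}{X\sim\mu}$; sending $\xi\uparrow\xi^{*}$ and then infimizing over $z$ yields $\xi^{*}\le\inf_{z>0}\tfrac{1}{z}\evp{(X-(y-z))^+}{X\sim\mu}$. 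For the reverse direction, I would plug the RU minimizer $q^{*}=\bar{\mu}^{-1}(\xi^{*})$ in at $\xi=\xi^{*}$ and use $\cGar^{\mu}(\xi^{*})=y$: with $z^{*}:=y-q^{*}$, the optimality condition rearranges to $\xi^{*}=\tfrac{1}{z^{*}}\evp{(X-(y-z^{*}))^+}{X\sim\mu}$, so the infimum is attained.

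The main obstacles are regularity and boundary cases. The reverse-direction step requires $z^{*}>0$, which holds whenever $y$ lies in the open effective range $\bigl(\evp{X}{X\sim\mu},\,\sup\mathrm{supp}(\mu)\bigr)$; the degenerate cases $y\ge\sup\mathrm{supp}(\mu)$ (both sides equal $0$) and $y\le\evp{X}{X\sim\mu}$ (both sides equal $1$, with the right-hand infimum achieved only in the limit $z\to\infty$) I would dispose of by direct inspection. A second subtlety is when $\mu$ has atoms: then $\bar{\mu}^{-1}$ has jumps and $\cGar^{\mu}$ can be discontinuous at $\xi^{*}$, so the identity $\cGar^{\mu}(\xi^{*})=y$ must be replaced by the appropriate one-sided limit, and one must verify the RU minimizer is still attained on the corresponding flat piece of the objective in $q$. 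Modulo these standard manipulations, the two inequalities meet and the formula follows.
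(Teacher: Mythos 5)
There is nothing in the paper to compare against here: Proposition \ref{prop:varhltrans} is imported verbatim from \cite{CEO12} (Prop.\ 4.10(c)) and used as a black box in the proof of Theorem \ref{thm:givenmuoptisay}, so you are supplying a proof the paper never gives. On its own merits, your argument is sound and is essentially the standard derivation: the identity is the tail-probability form of superquantile duality, and your two steps --- (i) the Rockafellar--Uryasev characterization $\cGar^{\mu}(\xi)=\inf_{q}\lrsetb{q+\tfrac{1}{\xi}\evp{(X-q)^{+}}{X\sim\mu}}$ with minimizer $q=\bar{\mu}^{-1}(\xi)$, and (ii) the identification $\bar{\mu}^{\textsc{HL}}(y)=\xi^{*}=\sup\lrsetb{\xi:\cGar^{\mu}(\xi)\ge y}$ via monotonicity of $\cGar^{\mu}$, followed by the change of variable $q=y-z$ and matching inequalities --- is exactly the convex-duality route that underlies the cited result (and it connects nicely to the variational characterization of $\cGar$ the paper itself points to via \cite{RockafellarRoyset14}). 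Two remarks on your caveats. First, the worry about $\cGar^{\mu}$ being discontinuous at $\xi^{*}$ when $\mu$ has atoms is unnecessary: $\cGar^{\mu}(\xi)=\tfrac{1}{\xi}\int_{0}^{\xi}\bar{\mu}^{-1}(\lambda)\,d\lambda$ is continuous on $(0,1]$ regardless of atoms, since jumps of $\bar{\mu}^{-1}$ are smoothed by the integral; the only genuine atom-related care is that the RU optimality condition $\bar{\mu}(q)=\xi$ should be read as $0$ lying in the subdifferential, with the objective flat across the quantile interval, as you note. Second, the attainment issue is real but is a defect of the statement as quoted rather than of your argument: for $y\le\evp{X}{X\sim\mu}$ (both sides equal $1$) and for $y$ equal to an atomic essential supremum with mass accumulating from below, the infimum over $z>0$ equals $\xi^{*}$ but need not be attained, so ``$\min$'' should strictly be ``$\inf$'' outside the open effective range you identify --- which is all that is needed where the paper invokes the proposition, since the proof of Theorem \ref{thm:givenmuoptisay} only minimizes an upper bound over $K<y$.
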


\begin{proof}[Proof of Theorem \ref{thm:givenmuoptisay}]
We adapt an argument from \cite{brown2001robust}, via \cite{CEO12}. 
Define $Q_{\tau} := \max_{s \leq \tau} A_{s}$. Let $K < y$, and $\cG (z) = \frac{(z-y)^{+} }{y-K}$. 
Then the corresponding AY process is also nonnegative: 
%$g (Q_{\tau}) = \cG (Q_{\tau}) - Q_{\tau} \cG' (Q_{\tau}) = \frac{ \ifn (Q_{\tau} \geq y) }{y-K} (Q_{\tau} - y) + \frac{y}{y-K} - Q_{\tau} \frac{ \ifn (Q_{\tau} \geq y) }{y-K} = \frac{y}{y-K} \ifn (Q_{\tau} < y) \geq 0$, 
%$Y_{\tau} := 
$\cG (Q_{\tau}) - (Q_{\tau} - A_{\tau}) \cG' (Q_{\tau}) = \frac{ \ifn (Q_{\tau} \geq y) }{y-K} \lrp{ (Q_{\tau} - y) - (Q_{\tau} - A_{\tau}) } = \frac{ \ifn (Q_{\tau} \geq y) }{y-K} \lrp{ A_{\tau} - y } \geq 0$. Therefore, 
% \begin{align*}
% \ifn (Q_{\tau} \geq y)
% \leq \ifn (Q_{\tau} \geq y) \lrp{ 1 + \frac{ A_t - y }{y-K} } 
% \leq \ifn (Q_{\tau} \geq y) \lrp{ \frac{ A_t - K }{y-K} } 
% \leq \frac{ (A_t - K)^{+} }{y-K}
% \end{align*}
\begin{align*}
\ifn (Q_{\tau} \geq y)
&= \ifn (Q_{\tau} \geq y) \lrp{ \frac{ A_t - K }{y-K} + \frac{ y - A_t }{y-K} } 
\leq \ifn (Q_{\tau} \geq y) \lrp{ \frac{ A_t - K }{y-K} } 
\leq \frac{ (A_t - K)^{+} }{y-K}
\end{align*}
Taking expectations on both sides gives 
$$ \prp{ Q_{t} \geq y }{} \leq \frac{1}{y-K} \evp{(X - K)^{+} }{X \sim A_{\tau}} \stackrel{(a)}{\leq} \frac{1}{y-K} \evp{(X - K)^{+} }{X \sim \mu}$$
where $(a)$ is by the assumption $A_{\tau} \preceq \mu$ and the definition of stochastic dominance. 
This holds for any $K$. 
Minimizing over $K < y$ and using Prop. \ref{prop:varhltrans} gives $\prp{ Q_{t} \geq y }{} \leq \bar{\mu}^{\textsc{HL}} (y) = \prp{ X \geq y }{X \sim \mu^{\textsc{HL}} }$, 
yielding the result.
%For all nonnegative nondecreasing concave $h$, $H_t \geq Y_t \geq g (S_t)$, so  $\evp{ h (A_t) - h (H_t) }{} \leq \evp{ h' (H_t) (A_t - H_t) \ifn \lrp{A_t < H_t} }{} + \evp{ h' (H_t) (A_t - H_t) \ifn \lrp{A_t \geq H_t} }{} \leq \evp{ h' (g (S_t)) (A_t - H_t) \ifn \lrp{A_t \geq H_t} }{} $
\end{proof}

\begin{proof}[Proof of Lemma \ref{lem:bachelier}]
It suffices to prove that if $B_t$ is defined as specified, its differences have the specified properties, which together with the initial conditions define the process uniquely almost surely.
By part $(a)$ and then parts $(b,c)$ of Lemma \ref{lem:aydifferences}, 
$ B_{t} - B_{t-1} = Y_{t} - Y_{t-1} + D_{- \cG} (S_{t}, S_{t-1}) = (M_{t} - M_{t-1}) \cG' (S_{t-1})$. 
% By definition of $B$ and of $(M, \cG)$'s AY process $Y$ in Theorem \ref{thm:AYpotentialdecomp}, 
So $B_t = Y_t + \sum_{s=1}^{t} D_{- \cG} (S_{s}, S_{s-1})$. Taking the cumulative maximum of both sides, 
$\max_{s \leq t} B_s = \max_{s \leq t} Y_s + \sum_{s=1}^{t} D_{- \cG} (S_{s}, S_{s-1}) \stackrel{(a)}{=} \cG (S_t) + B_t - Y_t = B_t + (S_t - M_t) \cG' (S_t)$, 
where $(a)$ uses Lemma \ref{lem:aydifferences} and the definition of $B$. 
\end{proof}

\begin{proof}[Proof of Theorem \ref{thm:revbachelier}]
Recall $\cG' \geq 0$ by concavity of $\cG$. The definitions in \eqref{eq:revbachelier} imply \eqref{eq:bacheq}, with initial conditions for $M_0, S_0$ specified. Given $B_t$ (w.p. 1) and $\cF_{t-1}$, this unambiguously specifies $M_t$, and hence $S_t$. The decomposition is therefore a.s. unique (as can also be shown by contradiction). 
% and by part $(a)$, $B$ and $M$ attain cumulative maxima at exactly the same times. Defining $\tau_0 := 0 < \tau_1 < \tau_2 < \dots \tau_C \in \lrsetb{ \tau \leq t : M_{\tau} = S_{\tau} }$ and $\tau_{C+1} := t$, we can rewrite 
% \begin{align}
% \label{eq:invopbach}
% M_t &= \sum_{s=1}^{t} \frac{ B_s - B_{s-1} }{\cG' (S_{s-1})} = \sum_{i=0}^{C-1} \sum_{s=\tau_{i}}^{\tau_{i+1} - 1} \frac{ B_{s+1} - B_{s} }{\cG' (S_{\tau_{i}} )} + \sum_{s=\tau_{C}}^{t-1} \frac{ B_{s+1} - B_{s} }{\cG' (S_{\tau_{C}} )} = \sum_{i=0}^{C} \frac{ B_{\tau_{i+1}} - B_{\tau_{i} } }{\cG' (S_{\tau_{i}} )} 
% \end{align}
% Note that $B_{\tau_{i+1}} - B_{\tau_{i} } > 0$ for all $i$, because $\tau_{i}$ are the times when $B$ attains its cumulative maxima. Therefore, each term of \eqref{eq:invopbach} is nonnegative, and so is $M_t$
\end{proof}

\subsection{Notes on the definitions}

\paragraph{Definition \ref{defn:fsd}. }
$(a) \implies (b)$ follows by computing the expectations on each side of $(b)$ by sampling a uniform $[0,1]$ r.v. $\mathcal{U}$ and applying $(a)$ on this variable. $(b) \implies (a)$ follows by setting $h(s) = \ifn \lrp{s \geq c}$ for any $c$. 

\paragraph{Definition \ref{defn:supq}. }
We prove the form of $\cG^{\mu}$: $\int_0^{1} g^{\mu} \left( \frac{x}{u} \right) du = \int_{0}^{1} \bar{\mu}^{-1} (u/x) du = x \int_{0}^{1/x} \bar{\mu}^{-1} (\lambda) d \lambda = \cGar^{\mu} (1/x)$.

\section{Prototypical example: $z$-test and sub-Gaussian statistics}
\label{sec:ztest}

For sub-Gaussian statistics, characterization of their null distributions often ultimately relies on the Central Limit Theorem (CLT). 
Therefore, we use the $z$-test as a prototypical example to introduce the  concentration behaviors.

\subsection{A $p$-value for a fixed time}
The $z$-test's statistic, appropriately normalized, is a sum of standard normal random variables $M_t = \sum_{i=1}^{t} Z_i$, and its moment-generating function (m.g.f.) of any $\lambda \in \RR$ is $\evp{ e^{\lambda M_t} }{} = e^{\frac{\lambda^2}{2} t}$. 
So the variable $Y_t^{\lambda} := e^{\lambda M_t - \frac{\lambda^2}{2} t }$ has mean $1$, and Markov's inequality tells us that $A_t^{\lambda} := \frac{1}{Y_t^{\lambda}}$ meets the above definition of a $p$-value, i.e. $\prp{A_t^{\lambda} \leq s}{} \leq s \;\;\forall s$. 

All this holds for any $\lambda$, so the best $p$-value at a fixed time is $A_t = \min_{\lambda} A_t^{\lambda} = e^{- M_t^2 / 2t}$, recovering the well-known Gaussian tail. 
Peeking can be disastrous in this canonical scenario, leading to a profusion of false positives (indeed, classical results \citep{AMR69} prove that a peeker willing to wait for the $z$-test as long as necessary can report any desired $p < 1$ w.p. 1).

\subsection{Inoculation against peeking by mixing distributions}
\label{sec:ztestinocarg}

To devise such a peeking-robust $H$, recall the distribution of $M$ as specified by its m.g.f. at all times: $\evp{ e^{\lambda M_t - \frac{\lambda^2}{2} t } }{} = 1 \;\;\forall \lambda$. 
So for any distribution $\Gamma$, we have $1 = \evp{ W_{t}^{\Gamma} }{}$ for the mixed process $W_{t}^{\Gamma} := \evp{ e^{\lambda M_t - \frac{\lambda^2}{2} t } }{\lambda \sim \Gamma}$, and the process $W_{t}^{\Gamma}$ is a nonnegative martingale.
Its expectation is controlled at any stopping time $\tau$ by the optional stopping theorem (Theorem \ref{thm:optstoppingsupermart}), so $\evp{ W_{\tau}^{\Gamma} }{} = \evp{ W_{0}^{\Gamma} }{} = 1$. 
Therefore, defining $H_{\tau}^{\Gamma} := \frac{1}{W_{\tau}^{\Gamma}}$ and using Markov's inequality on $W_{\tau}^{\Gamma}$, we have $\prp{ H_{\tau}^{\Gamma} \leq s}{} = \prp{ W_{\tau}^{\Gamma} \geq \frac{1}{s}}{} \leq s$, so $H_{\tau}^{\Gamma}$ behaves like a $p$-value despite the arbitrariness of the stopping time $\tau$. 
This is true regardless of the distribution $\Gamma$, which controls how the reported $H$ varies over each \emph{sample path} \citep{B14, HRMS18}.

% \paragraph{Choosing a mixing distribution. }
% \label{sec:choosinggamma}

Such pathwise variation is unavoidably $\Omega (\sqrt{t \log \log t})$, the content of a fundamental theorem of probability -- the \emph{law of the iterated logarithm} (LIL). 
Proofs of the asymptotic \citep{RS70} and finite-time LIL \citep{B14} have used its relationship with mixed processes like $X_{\tau}^{\Gamma}$, and that line of work has explored how best to choose $\Gamma$ \citep{HRMS18}. 
% The $\Gamma$ we recommend in this paper chooses $\lambda$ from geometrically-spaced discrete support $\lambda_k := \lambda_{max} \eta^{-k-(1/2)}$ for $k=0,1,2,\dots$, with probabilities $\propto k^{-s}$ for parameters $\eta > 1, s > 1$. %; Appendix \ref{sec:choosemixdistr} discusses details and alternatives). 

%In this section, we develop another \emph{extremal} approach to design tests that are immune to techniques of peeking and premature sample selection.
%\paragraph{Generalizing to martingales. }
\subsection{Robust $p$-values for sub-Gaussian statistics}

Despite their generality, (super)martingales whose increments are sub-Gaussian follow concentration behavior like $M_t$, the Gaussian random walk of the $z$-statistic we have discussed.% (see \Cref{sec:lessbounded}). 
The recipe for $H$-values is much the same for these generalizations, % (see \Cref{sec:generalhvals}), 
where $W_t = \evp{ e^{\lambda M_t - \frac{\lambda^2}{2} V_t } }{\lambda \sim \Gamma}$ is a (super)martingale for different values of $\lambda$, with $V_t$ being the martingale's cumulative variance process. 
So $H_{t}^{\Gamma} 
:= \lrp{\evp{ e^{\lambda M_t - \frac{\lambda^2}{2} V_t } }{\lambda \sim \Gamma}}^{-1}$ satisfies $\prp{H_{\tau}^{\Gamma} \leq s}{} \leq s$ for any stopping time $\tau$ -- by the argument of \Cref{sec:ztestinocarg}. 
%In short, a sub-Gaussian (super)martingale statistic $M_t$ can be converted to a robust $p$-value and real-valued distribution $\Gamma$, 
% \begin{align}
% \label{eq:defofh}
% H_{t}^{\Gamma} 
% := \lrp{\evp{ e^{\lambda M_t - \frac{\lambda^2}{2} t } }{\lambda \sim \Gamma}}^{-1}
% \end{align}
This makes it a robust $p$-value.%, which we call an $H$-value in this paper.
\footnote{The guarantees on $H$ hold even at the time of the \emph{ultimate} minimum of $H$ (see \Cref{sec:multreprmart}). 
This is not a stopping time, as it depends on future events, and was originally termed an ``honest time" \citep{Nikeghbali07, Nikeghbali13}. 
Following this, a robust $p$-value is also ``honest."}

\end{document}